\newtheorem{theorem}{Theorem}[section]
\newtheorem{definition}{Definition}[section]
\newtheorem{corollary}{Corollary}[section]
\newtheorem{remark}{Remark}[section]
\newtheorem{tverd}{Proposition}[section]
\begin{document}

\begin{center}
 \textbf{ASYMPTOTIC ANALYSIS OF A BOUNDARY-VALUE PROBLEM\\[2mm] IN A THIN CASCADE DOMAIN WITH A LOCAL JOINT}
\end{center}

\begin{center}
A. V. Klevtsovskiy, \ \ \ T.~A.~Mel'nyk
\end{center}
\begin{center}
{\small
Faculty of Mathematics and Mechanics, Department of Mathematical Physics\\
Taras Shevchenko National University of Kyiv\\
Volodymyrska str. 64,\ 01601 Kyiv,  \ Ukraine
\\
E-mail: avklevtsovskiy@gmail.com, \ \ melnyk@imath.kiev.ua}
\end{center}

\bigskip

\begin{abstract}
A nonuniform Neumann boundary-value problem is considered for the Poisson equation
 in a thin domain $\Omega_\varepsilon$ coinciding with two thin rectangles connected through a joint of diameter ${\cal O}(\varepsilon)$.
 A rigorous procedure is developed to construct the complete asymptotic expansion for the solution as the small parameter $\varepsilon \to 0.$ Energetic and  uniform pointwise estimates for the difference between the solution of the starting problem $(\varepsilon >0)$ and the solution of the corresponding limit problem  $(\varepsilon =0)$ are proved, from which the influence of the geometric irregularity of the joint is observed.
 \end{abstract}

\bigskip

{\bf Key words:} \  asymptotic expansion; asymptotic estimate; thin domain; thin domain

 \qquad \qquad \ \qquad with a local geometrical irregularity

\medskip

{\bf MOS subject classification:} \  35C20, 35B40, 35J05, 74K30

\medskip


\section*{Introduction}

Investigations of various physical and biological processes in channels are urgent for numerous fields of natural sciences. Special interest of researchers is focused on various effects observed in vicinities of local irregularities of the geometry (widening or narrowing) of channels (e.g., adhesion to the walls, welds, and stenosis). Results of recent theoretical, experimental and numerical studies of flows and wall-pressure fluctuations in channels with different types of narrowing are summarized in \cite{Borysiuk2007-1,Borysiuk2007-2,Borysiuk2010} and references therein.
Also the study of influence of local geometrical irregularities is very important in engineering, since such irregularities often
directly affect the strength (stability, resistance, power, etc.) of constructions and devices. A fairly complete review on this topic has been presented in the article by Gaudiello and Kolpakov \cite{Gaudiello-Kolpakov}.

In addition, the paper \cite{Gaudiello-Kolpakov} is a pioneering paper, where
the influence of a local geometrical irregularity in a thin domain was studied  with the help of multi-scale approach.
Videlicet, the authors derived the limit problem for a homogeneous Neumann problem for the Poisson equation with a right-hand side that depends only of one longitudinal variable in junctions of thin domains and showed that the local geometric irregularity in the joint does not affect the view of the corresponding limit problem. However, the convergence theorem and asymptotic estimates have not been proved.

It should be stressed that the error estimates and convergence rate are very important both for the justification of  the adequacy of one- or two-dimensional models aimed at the description of actual three-dimensional thin bodies and for the study of boundary effects and effects of local (internal) inhomogeneities in applied problems. Particular importance for engineering practice is pointwise estimates for approximations, since large values of tearing stresses in small region at first involve local material damage and then the destruction of whole construction.
Those estimates can be obtained and substantiated as a result of the development of new asymptotic methods.

In \cite{Mel_Klev_NO} we proved the error estimates and constructed the asymptotic expansion for the solution of  a boundary-value problem in a thin cascade domain without joints (see Fig.~\ref{fig1}).
\begin{figure}[htbp]
\begin{center}
\includegraphics[width=14cm]{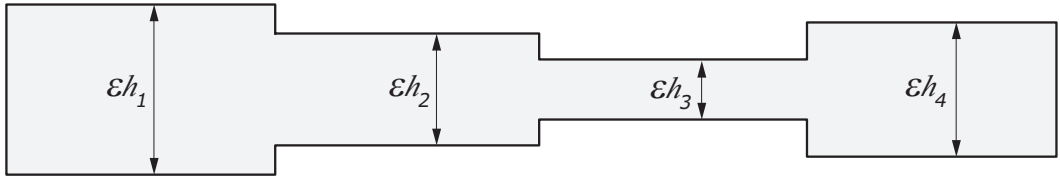}
\caption{Thin cascade domain without local joints}\label{fig1}
\end{center}
\end{figure}

The present paper is devoted to further development of the asymptotic method proposed in \cite{Mel_Klev_NO}.
 Namely, we consider  a nonuniform Neumann boundary-value problem for the Poisson equation with the right-hand side that depends both on longitudinal and transversal variables in a thin cascade domain that consists of two thin rectangles of different thicknesses and local
geometric irregularity (the joint) between them (this can be either a local widening (see. Fig.~\ref{fig2}) or local narrowing (see  Fig.~\ref{fig3})).
As we will see, there is no essential difference between the construction of the asymptotic expansion for the solution to
a boundary-value problem in  $2$- or $3$- dimensional thin cascade domain formed by two thin rods.
Therefore, to simplify calculations, we consider the two-dimensional case.

\begin{figure}[htbp]
\begin{center}
\includegraphics[width=14cm]{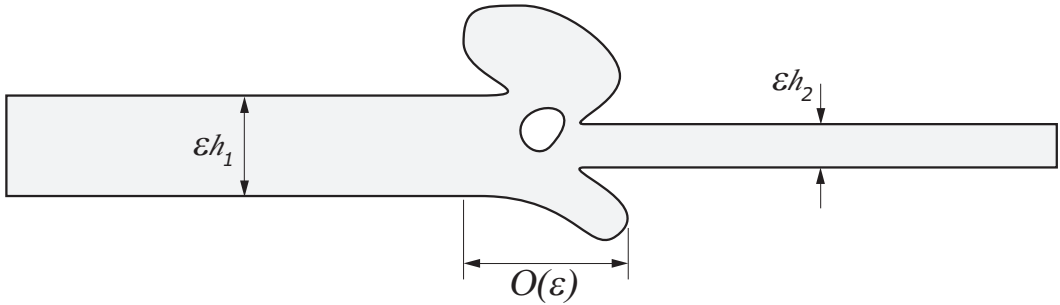}
\caption{Thin cascade domain with a local widening }\label{fig2}
\end{center}
\end{figure}

\begin{figure}[htbp]
\begin{center}
\includegraphics[width=14cm]{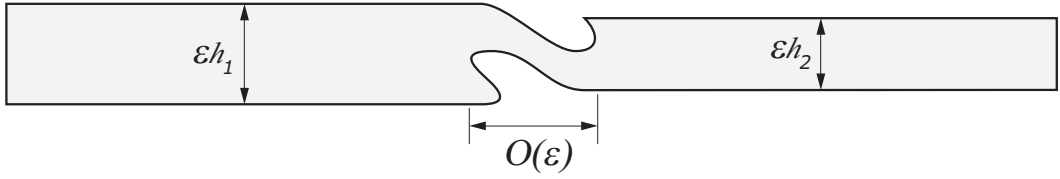}
\caption{Thin cascade domain with a local narrowing }\label{fig3}
\end{center}
\end{figure}

A principal new feature of this paper in comparison with the paper \cite{Gaudiello-Kolpakov} is the construction and justification
of the complete asymptotic expansion for the solution and the proof both energetic and  pointwise uniform estimates for the difference between the solution of the starting problem $(\varepsilon >0)$ and the solution of the corresponding limit problem  $(\varepsilon =0).$
As a result, it became possible to identify more precisely the impact of the geometric irregularity and material characteristics of the joint on
some properties of the whole structure.  In addition, on the one hand, our results confirm and complement some of conclusions of the article \cite{Gaudiello-Kolpakov}, on the other hand show that the main assumptions made in this article are not correct. A more informative discussion is given in the last section of the present paper.

The paper is organized as follows. In Section 2, we construct the formal asymptotic expansion for the solution to the problem (\ref{probl}). To perform this  we generalize the asymptotic method for
 boundary-value problems in thin domains with constant thickness proposed in monograph~ \cite{N-book-02}. In particular, we introduced a special inner asymptotic expansion in a neighborhood of the joint,  determine its coefficients and study some their properties as solutions to corresponding boundary-value problems in an unbounded domain. Thus, the asymptotics for the solution consists of three parts: the regular part, the boundary parts near the extreme vertical sides and the inner part in a neighborhood of the joint.

In Section 3, we justify the asymptotics (Theorem \ref{mainTheorem}) and prove asymptotic estimates for the leading terms of the asymptotics (Corollary \ref{Corollary}). In Conclusions  we analyze results obtained in this paper and discuss possible generalizations.

\section{Statement of the problem}

The model thin cascade domain $\Omega_\varepsilon$  consists of two thin rectangles
$$
\Omega_\varepsilon^{(1)}=\Big((-1,-\varepsilon\frac{l}{2})\times\Upsilon_\varepsilon^{(1)}\Big)\quad \text{and} \quad \Omega_\varepsilon^{(2)}=\Big((\varepsilon\frac{l}{2},1)\times\Upsilon_\varepsilon^{(2)}\Big)
$$
that are joined through $\Omega_\varepsilon^{(0)}$ (referred in the sequel "joint").
Here $\Upsilon_\varepsilon^{(i)}=\left(-\varepsilon\frac{h_i}{2},\varepsilon\frac{h_i}{2}\right), \ i=1,2;$ \ $\varepsilon$ is a small parameter; $l$, $h_1$ and $h_2$ are fixed positive constants.

The joint $\Omega_\varepsilon^{(0)}$ are formed by the homothetic transformation with the coefficient $\varepsilon$ from a domain
$\Xi^{(0)}$,  i.e., $\Omega_\varepsilon^{(0)} = \varepsilon\, \Xi^{(0)}.$
In addition, we assume that
$$
\Omega_\varepsilon^{(0)} \bigcap \big\{(x,y): \ |y| \le \varepsilon\, \max\{h_1, h_2\} \big\} \subset
\big\{(x,y): \ |x| \le (- \varepsilon \frac{l}{2} , \varepsilon \frac{l}{2} ) \big\}
$$
and  the interior of the union
$$
\overline{\Omega_\varepsilon^{(1)}} \ \cup \
\overline{\Omega_\varepsilon^{(0)}} \ \cup \ \overline{\Omega_\varepsilon^{(2)}}.
$$
is a domain with the Lipschitz boundary, which we denote by $\Omega_\varepsilon$ (see e.g. Fig. \ref{fig4}).

\begin{figure}[htbp]
\begin{center}
\includegraphics[width=14cm]{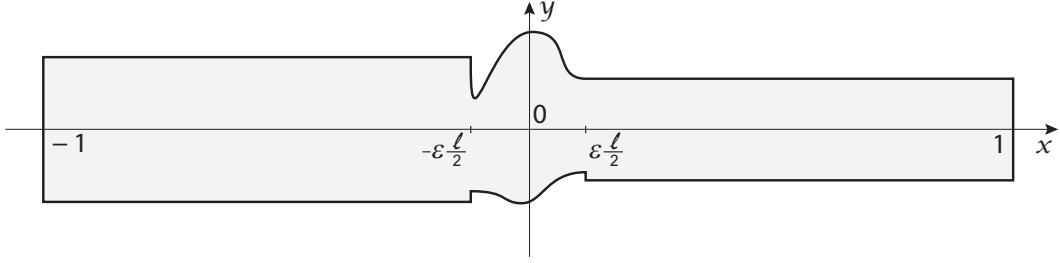}
\caption{The model thin cascade domain $\Omega_\varepsilon$ with a local geometric irregularity}\label{fig4}
\end{center}
\end{figure}

\begin{remark}
As an example of the joint we can consider the following domain:
$$
\Omega_\varepsilon^{(0)} = \varepsilon\,\Xi^{(0)}= \big\{ (x, y) : \ -\varepsilon h_0^-(\tfrac{x}{\varepsilon}) < y < \varepsilon h_0^+(\tfrac{x}{\varepsilon}), \ y\in(-\varepsilon\tfrac{l}2, \ \varepsilon\tfrac{l}2) \big\},
$$
where $\Xi^{(0)} = \big\{ (\xi,\eta) : \ -h_0^-(\xi) < \eta < h_0^+(\xi), \ \xi\in(-\tfrac{l}2, \ \tfrac{l}2) \big\},$
the functions $h_0^-$ and $h_0^+$ belong to the space $C^1 ([-\tfrac{l}2, \tfrac{l}2]),$  take positive values on the segment $[-\tfrac{l}2, \tfrac{l}2]$ and $h_0^\pm(-\tfrac{l}2) \le h_1,$  $h_0^\pm(\tfrac{l}2) \le h_2.$
With the help of functions $h_0^-$ and $h_0^+$ it is possible to describe both a local
narrowing and a local widening.
\end{remark}

In the domain $\Omega_\varepsilon,$ we consider the following mixed boundary-value problem:
\begin{equation}\label{probl}
\left\{\begin{array}{rcll}
-\Delta{u_\varepsilon}(x,y)                                                & = & f(x,\frac{y}{\varepsilon}),        & (x,y)\in\Omega_\varepsilon,                  \\[2mm]
-\partial_\nu{u_\varepsilon}(x,y)|_{y=\pm{\varepsilon\frac{h_i}{2}}}         & = & {\varepsilon\varphi_\pm^{(i)}}(x), & x\in{I_\varepsilon^{(i)}},\ i=1,2,           \\[2mm]
u_\varepsilon(-1,y)                                                        & = &
0,                                 & y\in\Upsilon_\varepsilon^{(1)},              \\[2mm]
u_\varepsilon(1,y)                                                         & = &
0,                                 & y\in\Upsilon_\varepsilon^{(2)},              \\[2mm]
\partial_\nu{u_\varepsilon}(x,y)                                           & = &
0,                                 & (x,y)\in\Gamma_\varepsilon,
\end{array}\right.
\end{equation}
\\[2mm]
where $I_\varepsilon^{(1)}=(-1,-\varepsilon\frac{l}{2}), \, I_\varepsilon^{(2)}=(\ \varepsilon\frac{l}{2},\ 1)$, \, $\partial_\nu$ is the outward normal derivative, and the boundary of the joint is described by the formula
$$
\Gamma_\varepsilon = \partial\Omega_\varepsilon \backslash \Big(\bigcup\limits_{i=1}^2 \left(\big(I_\varepsilon^{(i)} \times \{\pm\varepsilon\tfrac{h_i}2 \} \big) \cup \big(\{(-1)^i\}\times\Upsilon_\varepsilon^{(i)}\big) \right)\Big).
$$

Assume that the given functions $f$ and $\{\varphi_\pm^{(i)}\}$ are
smooth in the corresponding domains of definition.

It follows from the theory of linear boundary-value problems that, for any fixed value of $\varepsilon,$ problem (\ref{probl})
possesses a unique weak solution $u_\varepsilon$ from the Sobolev space $H^1(\Omega_\varepsilon)$ such that its traces on the vertical
end sides of the domain $\Omega_\varepsilon$ are equal to zero, i.e., $u_\varepsilon|_{x=\pm1}=0,$ and the solution satisfies the integral identity
\begin{equation}\label{int-identity}
\int_{\Omega_\varepsilon} \nabla u_\varepsilon \cdot \nabla \psi \, dx\,dy =\int_{\Omega_\varepsilon} f \,\psi \, dx\,dy
 \mp \varepsilon  \sum_{i=1}^2 \int_{I_\varepsilon^{(i)}} \varphi^{(i)}_{\pm} \, \psi\, dx
\end{equation}
for any function $\psi\in H^1(\Omega_\varepsilon)$ such that $\psi|_{x=\pm1}=0.$

\begin{remark}
In the right-hand side of identity (\ref{int-identity}), we introduce the abridged notation
$$
\mp \varepsilon  \sum_{i=1}^2 \int_{I_\varepsilon^{(i)}} \varphi^{(i)}_{\pm} \, \psi\, dx
:= - \varepsilon  \sum_{i=1}^2 \int_{I_\varepsilon^{(i)}} \varphi^{(i)}_{+} \, \psi\, dx
+ \varepsilon  \sum_{i=1}^2 \int_{I_\varepsilon^{(i)}} \varphi^{(i)}_{-} \, \psi\, dx
$$
and use it in what follows.
\end{remark}

{\sf
The aim of the present paper is to construct and justify the asymptotic expansion of the
solution $u_\varepsilon$ as $\varepsilon \to 0$.
}

\section{Formal asymptotic expansion}

\subsection{Regular part of the asymptotics}

We seek the regular part of the asymptotics in the form
\begin{equation}\label{regul}
u_\infty^{(i)}:=\sum\limits_{k=2}^{+\infty}\varepsilon^{k}\left(u_k^{(i)}(x,\frac{y}{\varepsilon})+\varepsilon^{-2}\omega_k^{(i)}(x)\right), \quad (x,y)\in\Omega^{(i)}_\varepsilon, \quad i=1,2.
\end{equation}
Formally substituting the series (\ref{regul}) into the differential equation and into the first boundary condition of problem (\ref{probl}), we obtain:
$$
-\sum\limits_{k=2}^{+\infty}\varepsilon^{k}\partial^2_{xx}{u}_k^{(i)}(x,\eta) -\sum\limits_{k=2}^{+\infty}\varepsilon^{k-2}\partial^2_{\eta\eta}{u}_k^{(i)}(x,\eta) -\sum\limits_{k=2}^{+\infty}\varepsilon^{k-2}\frac{d^2\omega_k^{(i)}}{dx^2}(x) \approx f(x,\eta), \quad \eta=\frac{y}{\varepsilon},
$$
$$
-\sum\limits_{k=2}^{+\infty}\varepsilon^{k}\partial_\eta{u}_k^{(i)}(x,\eta) \Big|_{\eta=\pm\frac{h_i}{2}} \approx \varepsilon^2\varphi_\pm^{(i)}(x),
$$
where $\partial_x=\partial/\partial{x},$ \,  $\partial^2_{xx}=\partial^2/\partial{x}^2,$ \,
$\partial_\eta=\partial/\partial{\eta},$ \,  $\partial^2_{\eta\eta}=\partial^2/\partial{\eta}^2.$

Equating the coefficients of the same powers of $\varepsilon$, we deduce recurrent relations of the boundary-value problems
for the determination of the expansion coefficients in (\ref{regul}). Let us consider the problem for  $u_2^{(i)}:$
\begin{equation}\label{regul_probl_2}
\left\{\begin{array}{rcl}
-\partial_{\eta\eta}^2{u}_2^{(i)}(x,\eta)          & = & f(x,\eta)+\dfrac{d^{\,2}\omega_2^{(i)}}{dx^2}(x), \quad \eta\in\Upsilon_i,\\[2mm]
-\partial_{\eta}u_2^{(i)}(x,\eta)|_{\eta=\pm\frac{h_i}{2}} & = & \varphi_\pm^{(i)}(x), \quad x\in I_\varepsilon^{(i)} \\[2mm]
\langle u_2^{(i)}(x,\cdot) \rangle_{\Upsilon_i}   & = & 0, \quad x\in I_\varepsilon^{(i)}.
\end{array}\right.
\end{equation}
Here $\Upsilon_i=\left(-\frac{h_i}{2},\frac{h_i}{2}\right), $ \
$\langle u(x,\cdot) \rangle_{\Upsilon_i} :=  \int_{\Upsilon_i}u (x,\eta)d\eta,$ $\ i=1,2.$

For each value of $i,$ the problem (\ref{regul_probl_2}) is the Neumann problem for the ordinary differential equation with respect to the variable $\eta\in\Upsilon_i$; here, the variable $x$ is regarded as a parameter. We now write the necessary and sufficient conditions for
the solvability of problem (\ref{regul_probl_2}) and obtain the following differential equation for the function $\omega_2^{(i)}:$
\begin{equation}\label{omega_probl_2}
- h_i\frac{d^2\omega_2^{(i)}}{dx^2}(x) = \int_{\Upsilon_i}f(x,\eta)d\eta - \varphi_+^{(i)}(x) + \varphi_-^{(i)}(x), \quad x\in I_\varepsilon^{(i)} \quad (i=1, 2).
\end{equation}
Let $\omega_2^{(i)}$ be a solution of the differential equation (\ref{omega_probl_2}) (boundary conditions for this differential equation will be determined later). Then the solution of problem (\ref{regul_probl_2}) exist and the third relation in (\ref{regul_probl_2}) supplies the uniqueness of solution.

For determination of the coefficients $u_3^{(i)}, \  i=1, 2,$ we obtain the following problems:
\begin{equation}\label{regul_probl_3}
\left\{\begin{array}{rcl}
-\partial_{\eta\eta}^2{u}_3^{(i)}(x,\eta)               & = & \dfrac{d^{\,2}\omega_3^{(i)}}{dx^2}(x) , \quad \eta\in\Upsilon_i, \\[2mm]
-\partial_{\eta}u_3^{(i)}(x,\eta)|_{\eta=\pm\frac{h_i}{2}} & = & 0,    \quad x\in I_\varepsilon^{(i)}                        \\[2mm]
\langle u_3^{(i)}(x,\cdot) \rangle_{\Upsilon_i}  & = & 0, \quad x\in I_\varepsilon^{(i)}.
\end{array}\right.
\end{equation}
Repeating the previous reasoning, we find  ${u}_3^{(i)}\equiv 0$  and
$\dfrac{d^{\,2}\omega_3^{(i)}}{dx^2}(x)=0$ $x\in I_\varepsilon^{(i)},$ $i=1, 2.$

Let us consider boundary-value problems for the functions  $u_k^{(i)}, \ k\geq 4, \ i=1, 2 :$
\begin{equation}\label{regul_probl_k}
\left\{\begin{array}{rcl}
-\partial_{\eta\eta}^2{u}_k^{(i)}(x,\eta)    & = & \dfrac{d^{\,2}\omega_k^{(i)}}{dx^2}(x) + \partial_{xx}^2{u}_{k-2}^{(i)}(x,\eta), \quad \eta\in\Upsilon_i, \\[2mm]
-\partial_{\eta}u_k^{(i)}(x,\eta)|_{\eta=\pm\frac{h_i}{2}} & = & 0,     \quad x\in I_\varepsilon^{(i)}                       \\[2mm]
\langle u_2^{(i)}(x,\cdot) \rangle_{\Upsilon_i}   & = & 0, \quad x\in I_\varepsilon^{(i)}.
\end{array}\right.
\end{equation}
Assume that all coefficients $u_2^{(i)},\dots,u_{k-1}^{(i)},\omega_2^{(i)},\dots,\omega_{k-1}^{(i)}$ of the expansion (\ref{regul}) are determined.
We find  $u_k^{(i)}$ and $\omega_{k}^{(i)}$ from problem (\ref{regul_probl_k}). It follows from the solvability condition of problem (\ref{regul_probl_k})  that
$$
h_i\frac{d^2\omega_k^{(i)}}{dx^2}(x)=-\int_{\Upsilon_i}\partial_x^2{u}_{k-2}(x,\eta)d\eta= -\partial_x^2\left(\int_{\Upsilon_i}u_{k-2}^{(i)}(x,\eta)d\eta\right)=0,
$$
i.e., $\omega^{(i)}_k$ is a linear function solving the differential equation
\begin{equation}\label{omega_probl_k}
\frac{d^2\omega_k^{(i)}}{dx^2}(x)=0, \quad x\in I_\varepsilon^{(i)}.
\end{equation}

\begin{remark}
Boundary conditions for the differential equations (\ref{omega_probl_2}) and (\ref{omega_probl_k}) are unknown in advance.
They will be determined in the process of construction of the asymptotics.
\end{remark}

Thus, the solution of problem (\ref{regul_probl_k}) is uniquely determined. Hence, the recursive procedure for the determination
of the coefficients of series (\ref{regul}) is uniquely solvable.

\begin{remark}
By using the recursive procedure for the boundary-value problem (\ref{regul_probl_k}), one can easily show that the functions $u^{(i)}_{2p+1}$ are identically equal to zero
for odd $k=2p+1, \, p\in\Bbb{N}.$
\end{remark}


 \subsection{Boundary asymptotics near the vertical sides of domain $\Omega_\varepsilon$}

In the previous section, we have considered the regular asymptotics taking into account the inhomogeneity of the right-hand side of the differential
equation in (\ref{probl}) and the boundary conditions on the horizontal sides of the thin domain $\Omega_\varepsilon.$ In what follows,
we construct the boundary part of the asymptotics compensating the residuals of the regular part of the asymptotics
at the left side of $\Omega_\varepsilon^{(1)}$ and the right one of $\Omega_\varepsilon^{(2)}$.

At the left vertical part of the boundary of $\Omega_\varepsilon^{(1)},$  we seek the boundary asymptotics  for the solution in the form
\begin{equation}\label{prim+}
\Pi_\infty^{(1)}:=\sum\limits_{k=0}^{+\infty}\varepsilon^{k}\Pi_k^{(1)}\left(\frac{1+x}{\varepsilon},\frac{y}{\varepsilon}\right).
\end{equation}
Substituting (\ref{prim+}) into (\ref{probl}) and collecting coefficients with the same powers of $\varepsilon$, we obtain the following mixed boundary-value problems:
\begin{equation}\label{prim+probl}
 \left\{\begin{array}{rll}
  -\Delta_{\xi\eta}\Pi_k^{(1)}(\xi,\eta)=                      & 0,                  & (\xi,\eta)\in(0,+\infty)\times\Upsilon_1,                                     \\[2mm]
  -\partial_\eta\Pi_k^{(1)}(\xi,\eta)|_{\eta=\pm\frac{h_1}{2}}=& 0,                  & \xi\in(0,+\infty),                                                            \\[2mm]
  \Pi_k^{(1)}(0,\eta)=                                         & \Phi_k^{(1)}(\eta), & \eta\in\Upsilon_1,                                                            \\[2mm]
  \Pi_k^{(1)}(\xi,\eta)\to                                     & 0,                  & \xi\to+\infty,\ \ \eta\in\Upsilon_1,
 \end{array}\right.
\end{equation}\\[3mm]
where
$$
\xi=\frac{1+x}{\varepsilon}, \quad \eta=\frac{y}{\varepsilon}, \quad
\Phi_k^{(1)} =-\omega_{k+2}^{(1)}(-1), \ k=0,1,
$$
$$
\Phi_k^{(1)}(\eta)=-u_k^{(1)}(-1,\eta)-\omega_{k+2}^{(1)}(-1), \quad k\geq 2.
$$

Using the method of separation of variables, we determine the solution
\begin{equation}\label{view_solution}
\Pi_k^{(1)}(\xi,\eta)=\sum\limits_{p=0}^{+\infty}\left[a_p^{(1)}e^{-\frac{2p\pi}{h_1}\xi}\cos\left(\frac{2p\pi}{h_1}\eta\right)+ b_p^{(1)}e^{-\frac{(2p+1)\pi}{h_1}\xi}\sin\left(\frac{(2p+1)\pi}{h_1}\eta\right)\right]
\end{equation}
of problem (\ref{prim+probl}) at a fixed index $k,$
where
$$
a_p^{(1)}=\frac{2}{h_1}\int\limits_{-\frac{h_1}{2}}^{\frac{h_1}{2}}\Phi_k^{(1)}(\eta)\cos\left(\frac{2p\pi}{h_1}\eta\right)d\eta, \quad b_p^{(1)}=\frac{2}{h_1}\int\limits_{-\frac{h_1}{2}}^{\frac{h_1}{2}}\Phi_k^{(1)}(\eta)\sin\left(\frac{(2p+1)\pi}{h_1}\eta\right)d\eta,
$$
$$
a_0^{(1)}=\frac{1}{h_1}\int\limits_{-\frac{h_1}{2}}^{\frac{h_1}{2}}\Phi_k^{(1)}(\eta)d\eta = \frac{1}{h_1}\int\limits_{-\frac{h_1}{2}}^{\frac{h_1}{2}}u_k^{(1)}(-1,\eta)d\eta-\omega_{k+2}^{(1)}(-1)=-\omega_{k+2}^{(1)}(-1).
$$
It follows from the fourth condition in (\ref{prim+probl})  that coefficient $a_0^{(1)}$
must be equal to $0.$ As a result,
we arrive at  the following boundary conditions for the functions $\{\omega_{k+2}^{(1)}\}$:
\begin{equation}\label{bv_left}
\omega_{k+2}^{(1)}(-1)=0, \quad k\in\Bbb{N}_0.
\end{equation}

At the left vertical part of the boundary of $\Omega_\varepsilon^{(2)},$  we seek the boundary asymptotics  for the solution in the form
\begin{equation}\label{prim-}
\Pi_\infty^{(2)}:=\sum\limits_{k=0}^{+\infty}\varepsilon^{k}\Pi_k^{(2)}\left(\frac{1-x}{\varepsilon},\frac{y}{\varepsilon}\right).
\end{equation}
We obtain the following problems for the determination of the coefficients $\{\Pi_k^{(2)}\}_{k\in\Bbb{N}_0}$:
\begin{equation}\label{prim-probl}
 \left\{\begin{array}{rll}
  -\Delta_{\xi^*\eta}\Pi_k^{(2)}(\xi^*,\eta)=                    & 0,                  & (\xi^*,\eta)\in(0,+\infty)\times\Upsilon_2,                                     \\[2mm]
  -\partial_\eta\Pi_k^{(2)}(\xi^*,\eta)|_{\eta=\pm\frac{h_2}{2}}=& 0,                  & \xi^*\in(0,+\infty),                                                           \\[2mm]
  \Pi_k^{(1)}(0,\eta)=                                           & \Phi_k^{(2)}(\eta), & \eta\in\Upsilon_2,                                                              \\[2mm]
  \Pi_k^{(1)}(\xi^*,\eta)\to                                     & 0,                  & \xi^*\to+\infty, \ \ \eta\in\Upsilon_2,
 \end{array}\right.
\end{equation}\\[3mm]
where
$$
\xi^*=\frac{1-x}{\varepsilon},\quad \eta=\frac{y}{\varepsilon}, \quad
\Phi_k^{(2)} =-\omega_{k+2}^{(2)}(1), \ \ k=0,1,
$$
$$
\Phi_k^{(2)}(\eta)=-u_k^{(2)}(1,\eta)-\omega_{k+2}^{(2)}(1), \ \  k\geq 2.
$$

Similarly we find the following solution of problem (\ref{prim-probl}) at a fixed index $k:$
\begin{equation}\label{view_solution2}
\Pi_k^{(2)}(\xi^*,\eta)=\sum\limits_{p=0}^{+\infty}\left[a_p^{(2)}e^{-\frac{2p\pi}{h_2}\xi^*}\cos\left(\frac{2p\pi}{h_2}\eta\right)+ b_p^{(2)}e^{-\frac{(2p+1)\pi}{h_2}\xi^*}\sin\left(\frac{(2p+1)\pi}{h_2}\eta\right)\right],
\end{equation}
where
$$
a_p^{(2)}=\frac{2}{h_2}\int\limits_{-\frac{h_2}{2}}^{\frac{h_2}{2}}\Phi_k^{(2)}(\eta)\cos\left(\frac{2p\pi}{h_2}\eta\right)d\eta, \quad b_p^{(2)}=\frac{2}{h_2}\int\limits_{-\frac{h_2}{2}}^{\frac{h_2}{2}}\Phi_k^{(2)}(\eta)\sin\left(\frac{(2p+1)\pi}{h_2}\eta\right)d\eta,
$$
$$
a_0^{(2)}=\frac{1}{h_2}\int\limits_{-\frac{h_2}{2}}^{\frac{h_2}{2}}\Phi_k^{(2)}(\eta)d\eta = \frac{1}{h_2}\int\limits_{-\frac{h_2}{2}}^{\frac{h_2}{2}}u_k^{(2)}(1,\eta)d\eta-\omega_{k+2}^{(2)}(1)=-\omega_{k+2}^{(2)}(1).
$$
It follows from the fourth condition in (\ref{prim-probl})  that the coefficient $a_0^{(2)}$ is equal to 0. This is possible if
\begin{equation}\label{bv_right}
\omega_{k+2}^{(2)}(1)=0, \quad k\in\Bbb{N}_0.
\end{equation}

\begin{remark}
Since $u_{k}^{(i)}\equiv0$ for $k=2p+1, \, p\in\Bbb{N}$, we conclude that $\Phi_k^{(i)}=0$
and, hence,
$$
\Pi_{0}^{(i)}\equiv0, \ \Pi_{2p-1}^{(i)}\equiv0, \quad p\in\Bbb{N}, \quad i=1,2.
$$
Moreover, from representation (\ref{view_solution}) and (\ref{view_solution2}) it follows the following asymptotic relations
\begin{equation}\label{as_estimates}
\begin{array}{c}
  \Pi_k^{(1)}(\xi,\eta) =  {\cal O}(\exp(- \tfrac{\pi}{h_1}\xi)) \quad \mbox{as} \quad \xi\to+\infty,
  \\[2mm]
 \Pi_k^{(2)}(\xi^*,\eta) = {\cal O}(\exp(- \tfrac{\pi}{h_2}\xi^*)) \quad \mbox{as} \quad \xi^*\to+\infty.
\end{array}
\end{equation}
\end{remark}

Equalities  (\ref{bv_left}) and (\ref{bv_right}) specify the boundary conditions at points $-1$ and $1$  for all functions $\{\omega_k^{(1)}\}$ and $\{\omega_k^{(2)}\}$, respectively.


 \subsection{Inner boundary part of the asymptotics}\label{inner_asymp}
 To obtain conditions for the functions $\{\omega_k^{(1)}\}$ and $\{\omega_k^{(2)}\}$ at the point $0,$ we introduce an additional internal asymptotics in a neighbourhood of the joint. For this we pass to the following variables $\xi=\frac{x}{\varepsilon}$ and $\eta=\frac{y}{\varepsilon}.$ Then forwarding the parameter $\varepsilon$ to 0, we see  that the domain $\Omega_\varepsilon$ is transformed into the unbounded domain $\Xi$, which is the union of joint~$\Xi^{(0)}$  and two half strips
$\Xi^{(1)}=(-\infty,-\tfrac{l}{2})\times\Upsilon_1,$ $\Xi^{(2)}=(\tfrac{l}{2},+\infty)\times\Upsilon_2,$
i.e., $\Xi$ is the interior of $\overline{\Xi^{(1)}\cup \Xi^{(0)}\cup \Xi^{(2)}}$.

Let us introduce the following notation for parts of the boundary of the domain $\Xi$:
\begin{itemize}
  \item
 $\partial\Xi^{(i)}_=$ is the horizontal parts of the boundary $\partial\Xi^{(i)},$ $i=1, 2,$
  \item
 $\Gamma = \partial\Xi \setminus \left( \partial\Xi^{(1)}_= \cup \partial\Xi^{(2)}_= \right)$.
\end{itemize}

We seek the inner expansion in the form
\begin{equation}\label{junc}
N_\infty=\sum\limits_{k=1}^{+\infty}\varepsilon^k N_k\left(\frac{x}{\varepsilon},\frac{y}{\varepsilon}\right).
\end{equation}
Substituting (\ref{junc}) into (\ref{probl}) and equating coefficients at the same powers of $\varepsilon$, we derive the following relations for $\{N_k\}:$
\begin{equation}\label{junc_probl_n}
 \left\{\begin{array}{rcll}
  -\Delta_{\xi\eta}{N_k}(\xi, \eta)                                  & =   & F_k(\xi,\eta), &
   \quad (\xi, \eta)\in\Xi,
   \\[2mm]
  \partial_\nu{N_k}(\xi, \eta)                             & =   & 0 &
   \quad (\xi, \eta)\in\Gamma,
   \\[2mm]
  -\partial_\eta{N_k}(\xi, \eta)|_{\eta=\pm \frac{h_i}{2}} & =   & B_{k_\pm}^{(i)}(\xi), &
   \quad (-1)^i \xi \in (\tfrac{l}{2}, +\infty), \ \ i=1, 2,
   \\[2mm]
   N_k(\xi, \eta)                & \sim & \omega^{(i)}_{k+2}(0) + \Psi^{(i)}_{k}(\xi, \eta), &
   \quad (-1)^i\xi \to +\infty, \ \eta \in \Upsilon_i, \ \ i=1, 2,
 \end{array}\right.
\end{equation}
where
$$
F_0\equiv F_1\equiv 0, \qquad
F_k(\xi,\eta) = \dfrac{\xi^{k-2}}{(k-2)!}\dfrac{\partial^{k-2}f}{\partial x^{k-2}}(0, \eta),
\quad (\xi, \eta)\in\Xi,
$$
$$
B_{1_\pm}^{(i)}\equiv B_{1_\pm}^{(i)} \equiv 0, \qquad
B_{k_\pm}^{(i)}(\xi) = \dfrac{\xi^{k-2}}{(k-2)!}\dfrac{d^{k-2}\varphi_\pm^{(i)}}{dx^{k-2}}(0),
\quad (-1)^i \xi \in (\tfrac{l}{2}, +\infty), \ \ i=1, 2,
$$
\begin{equation}\label{Psi_k}
\begin{array}{c}
\Psi_{0}^{(i)} \equiv 0, \qquad    \Psi_{1}^{(i)}(\xi, \eta) = \xi\dfrac{d\omega_{2}^{(i)}}{dx}(0),  \quad  i=1, 2,
  \\[2mm]
\Psi_{k}^{(i)}(\xi, \eta) = \xi\dfrac{d\omega_{k+1}^{(i)}}{dx}(0) +
    \dfrac{\xi^k}{k!}\dfrac{d^k\omega_2^{(i)}}{dx^k}(0) + \sum\limits_{j=0}^{k-2}
    \dfrac{\xi^j}{j!}\dfrac{\partial^j{u_{k-j}^{(i)}}}{\partial x^j}(0, \eta),
\quad  i=1, 2, \ \ k \geq 2.
\end{array}
\end{equation}

The right hand side and  boundary conditions for problem (\ref{junc_probl_n}) are obtained with the help of the Taylor decomposition of the functions $f$ and $\varphi_\pm^{(i)}$ at the point $x=0$.
The fourth condition in (\ref{junc_probl_n}) appears by matching the regular and inner asymptotics in a neighborhood of the joint, namely the asymptotics of
the terms $\{N_k\}$ as $\xi \to \pm \infty$ have to coincide with the corresponding asymptotics of  terms of the regular expansions (\ref{regul}) as $x \to \pm 0,$ respectively.
Expanding each term of the regular asymptotics in the Taylor series at the point $x=0$
and collecting the coefficients of the same powers of $\varepsilon$ with regard to (\ref{omega_probl_k}), we get relations (\ref{Psi_k}).

A solution of problem (\ref{junc_probl_n}) is sought in the form
\begin{equation}\label{new-solution}
N_k(\xi, \eta) = \Psi_{k}^{(1)}(\xi, \eta)\chi_1(\xi)
+ \Psi_{k}^{(2)}(\xi, \eta)\chi_2(\xi) + \widetilde{N}_k(\xi, \eta),
\end{equation}
where $ \chi_i \in C^{\infty}(\Bbb{R}_+),\ 0\leq \chi_i \leq1$ and
$$
\chi_i(\xi)=
\left\{\begin{array}{ll}
0, & \text{if} \ \ (-1)^i \xi \leq 1+\tfrac{l}{2},
\\[2mm]
1, & \text{if} \ \ (-1)^i \xi \geq 2+\tfrac{l}{2},
\end{array}\right. \quad i=1,2.
$$
Then $\widetilde{N}_k$ has to be a  solution of the following problem:
\begin{equation}\label{junc_probl_general}
 \left\{\begin{array}{rcll}
  -\Delta_{\xi\eta}{\widetilde{N}_k}(\xi, \eta)                                  & = &
   \widetilde{F}_k(\xi,\eta), &
   \quad (\xi, \eta)\in\Xi;
   \\[2mm]
  \partial_\nu{\widetilde{N}_k}(\xi, \eta)                             & = &
   0 &
   \quad (\xi, \eta)\in\Gamma;
   \\[2mm]
  -\partial_\eta{\widetilde{N}_k}(\xi, \eta)|_{\eta=\pm \frac{h_i}{2}} & = &
   \widetilde{B}_{k_{\pm}}^{(i)}(\xi), &
   \quad (-1)^i \xi \in (\tfrac{l}{2}, +\infty), \ \ i=1, 2,
   \\[2mm]
  \widetilde{N}_k(\xi, \eta)   & \to & \omega^{(i)}_{k+2}(0), &
   \quad (-1)^i\xi \to +\infty, \ \eta \in \Upsilon_i, \ \ i=1, 2,
 \end{array}\right.
\end{equation}
where $\widetilde{F}_0\equiv0,$
$$
\widetilde{F}_1(\xi,\eta) = \sum\limits_{i=1}^2 \Big(
\xi\dfrac{d\omega_2^{(i)}}{dx}(0) \chi_i^{\prime\prime}(\xi) +
2\dfrac{d\omega_2^{(i)}}{dx}(0) \chi_i^{\prime}(\xi) \Big),
$$
$$
\widetilde{F}_k(\xi, \eta) = \sum\limits_{i=1}^2 \Bigg[ \Big( \xi\dfrac{d\omega_{k+1}^{(i)}}{dx}(0) +
    \dfrac{\xi^k}{k!}\dfrac{d^k\omega_2^{(i)}}{dx^k}(0) + \sum\limits_{j=0}^{k-2}
    \dfrac{\xi^j}{j!}\dfrac{\partial^j{u_{k-j}^{(i)}}}{\partial x^j}(0, \eta) \Big)\chi_i^{\prime\prime}(\xi) 
$$
$$
+ 2\Big( \dfrac{d\omega_{k+1}^{(i)}}{dx}(0) +
    \dfrac{\xi^{k-1}}{(k-1)!}\dfrac{d^k\omega_2^{(i)}}{dx^k}(0) + \sum\limits_{j=1}^{k-2}
    \dfrac{\xi^{j-1}}{(j-1)!}\dfrac{\partial^j{u_{k-j}^{(i)}}}{\partial x^j}(0, \eta) \Big)\chi_i^{\prime}(\xi) 
$$
$$
- \dfrac{\xi^{k-2}}{(k-2)!}\dfrac{\partial^{k-2}f}{\partial x^{k-2}}(0, \eta)\chi_i(\xi) \Bigg] + \dfrac{\xi^{k-2}}{(k-2)!}\dfrac{\partial^{k-2}f}{\partial x^{k-2}}(0, \eta)
$$
and
$$
\widetilde{B}_{0_\pm}^{(i)} \equiv \widetilde{B}_{1_\pm}^{(i)} \equiv 0, \qquad
\widetilde{B}_{k_\pm}^{(i)}(\xi) = \dfrac{\xi^{k-2}}{(k-2)!}\dfrac{d^{k-2}\varphi_\pm^{(i)}}{dx^{k-2}}(0)
\big( 1-\chi_i(\xi) \big),
\quad  i=1, 2, \qquad k \geq 2.
$$

To study the solvability of problem (\ref{junc_probl_general}), we use the approach proposed in \cite{Naz96}.
 Let $C^{\infty}_{0,\xi}(\overline{\Xi})$ be a space of functions infinitely differentiable in $\overline{\Xi}$ and finite with
respect to  $\xi$, i.e.,
$$
\forall \,v\in C^{\infty}_{0,\xi}(\overline{\Xi}) \quad \exists \,R>0 \quad \forall \, (\xi,\eta)\in\overline{\Xi} \quad |\xi|\geq R: \quad v(\xi,\eta)=0.
$$
We now define a  space  $\mathcal{H}:=\overline{\left( C^{\infty}_{0,\xi}(\overline{\Xi}), \ \| \cdot \|_\mathcal{H} \right)}$, where
$$
\|v\|_\mathcal{H}=\sqrt{\int_\Xi|\nabla v(\xi, \eta)|^2 \, d\xi d\eta+ \int_\Xi |v(\xi, \eta)|^2 |\rho(\xi)|^2 \, d\xi d\eta} ,
$$
and the function  $\rho(\xi)= (1+|\xi|)^{-1}, \ \xi \in \Bbb R.$

\begin{definition}
A function $\widetilde{N}_k$ from the space $\mathcal{H}$ is called a weak solution of problem (\ref{junc_probl_general}) if the identity
\begin{equation}\label{integr}
\int\limits_{\Xi}\nabla \widetilde{N}_k \cdot \nabla v \, d\xi d\eta =
\int\limits_{\Xi} \widetilde{F}_k \, v \, d\xi d\eta \mp \int\limits^{-\frac{l}{2}}_{-\infty} \widetilde{B}^{(1)}_{k_\pm}(\xi) \, v(\xi, \pm \tfrac{h_1}{2})\, d\xi
\mp \int\limits_{\frac{l}{2}}^{+\infty} \widetilde{B}^{(2)}_{k_\pm}(\xi) \, v(\xi, \pm \tfrac{h_2}{2})\, d\xi.
\end{equation}
holds for all $v\in\mathcal{H}$.
\end{definition}

From lemma 4.1, remarks 4.1 and 4.2, corollary 4.1 (see \cite{ZAA99}) it follows the following propositions.
\begin{tverd}\label{tverd1}
   Let $\rho^{-1} \widetilde{F}_k\in L^2(\Xi),$ $\rho^{-1} \widetilde{B}^{(2)}_{k_\pm} \in L^2(\tfrac{l}{2}, +\infty)$ and $\rho^{-1} \widetilde{B}^{(1)}_{k_\pm} \in L^2(-\infty, -\tfrac{l}{2}).$

Then there exist a weak solution of problem (\ref{junc_probl_general}) if and only if
\begin{equation}\label{solvability}
\int_{\Xi} \widetilde{F}_k \, d\xi d\eta \mp \int^{-\frac{l}{2}}_{-\infty} \widetilde{B}^{(1)}_{k_\pm}(\xi) \,  d\xi
\mp \int_{\frac{l}{2}}^{+\infty} \widetilde{B}^{(2)}_{k_\pm}(\xi) \,  d\xi = 0 .
\end{equation}
This solution is defined up to an additive constant.
The additive constant  can be chosen to guarantee  the existence and uniqueness of a weak solution of problem (\ref{junc_probl_general}) with differentiable asymptotics
\begin{equation}\label{inner_asympt_general}
\widetilde{N}_k(\xi,\eta)=\left\{
\begin{array}{rl}
{\cal O}(\exp( \frac{\pi}{h_1}\xi)) & \mbox{as} \ \ \xi\to-\infty,
\\[2mm]
 \delta_k^+ + {\cal O}(\exp(-\frac{\pi}{h_2}\xi)) & \mbox{as} \ \ \xi\to+\infty.
\end{array}
\right.
\end{equation}
\end{tverd}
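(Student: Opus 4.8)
The plan is to regard (\ref{junc_probl_general}) as an elliptic boundary-value problem in the domain $\Xi$ with two cylindrical outlets $\Xi^{(1)}, \Xi^{(2)}$ and to apply the Fredholm theory for such problems in the weighted space $\mathcal{H}$, in the form developed in \cite{Naz96,ZAA99}. The bulk of the work is therefore to verify that the hypotheses of that theory hold and to translate its conclusions into the concrete statements (\ref{solvability}) and (\ref{inner_asympt_general}). First I would check that the linear functional $L(v)$ given by the right-hand side of (\ref{integr}) is continuous on $\mathcal{H}$. Writing $\int_\Xi \widetilde{F}_k\, v = \int_\Xi (\rho^{-1}\widetilde{F}_k)(\rho v)$ and applying the Cauchy--Schwarz inequality gives $|\int_\Xi \widetilde{F}_k\, v| \le \|\rho^{-1}\widetilde{F}_k\|_{L^2(\Xi)}\,\|v\|_{\mathcal{H}}$, and the two boundary integrals are estimated the same way after a weighted trace inequality on the outlets; thus the hypotheses $\rho^{-1}\widetilde{F}_k\in L^2(\Xi)$ and $\rho^{-1}\widetilde{B}^{(i)}_{k\pm}\in L^2$ ensure $L\in\mathcal{H}^*$. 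I would also note that, owing to the cut-off construction in (\ref{new-solution}), these data actually have compact support in $\xi$: the factor $1-\chi_1-\chi_2$ multiplying the $f$-term in $\widetilde{F}_k$ and the factor $1-\chi_i$ in $\widetilde{B}^{(i)}_{k\pm}$ vanish for large $|\xi|$, so the integrability hypotheses are automatically satisfied.

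Next I would determine the kernel and the solvability condition. Choosing $v=\widetilde{N}_k$ in the homogeneous version of (\ref{integr}) yields $\int_\Xi |\nabla\widetilde{N}_k|^2\, d\xi d\eta = 0$, whence $\widetilde{N}_k$ is constant; since $\rho(\xi)=(1+|\xi|)^{-1}$ is square-integrable over each outlet, constants belong to $\mathcal{H}$, so the kernel is exactly the one-dimensional space of constants---this is the source of the ``up to an additive constant'' uniqueness. The cited results provide the Fredholm property of the operator $\mathcal{H}\to\mathcal{H}^*$: the weight $\rho$ is chosen precisely so that the constant mode is admissible while linear growth and the growing exponentials in the outlets are excluded. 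As the Dirichlet form is symmetric, the problem is formally self-adjoint, the cokernel is again spanned by constants, and solvability is equivalent to $L(1)=0$, which is exactly the compatibility relation (\ref{solvability}).

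For the asymptotics (\ref{inner_asympt_general}) I would exploit that $\widetilde{F}_k$ and $\widetilde{B}^{(i)}_{k\pm}$ are compactly supported: on each outlet, for $|\xi|$ large, $\widetilde{N}_k$ solves the homogeneous Neumann problem in the half-strip, so separating variables as in (\ref{view_solution}) represents it as a constant plus a superposition of transverse modes decaying exponentially, the slowest rate being $\tfrac{\pi}{h_i}$ and coming from the first nonzero Neumann eigenvalue $(\tfrac{\pi}{h_i})^2$ (compare (\ref{as_estimates})). Elliptic regularity up to the boundary then upgrades this to the differentiable asymptotics. Finally I would fix the free additive constant by subtracting the limit of $\widetilde{N}_k$ as $\xi\to-\infty$; this normalizes that limit to $0$ and leaves the limit at $\xi\to+\infty$ equal to the data-determined jump $\delta_k^+$, which yields the unique solution with asymptotics (\ref{inner_asympt_general}).

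The main obstacle is the Fredholm property together with the exponential decay estimates in $\mathcal{H}$: both rest on a weighted Poincar\'e-type a priori estimate valid modulo a compact remainder for the specific weight $\rho$, combined with the separation-of-variables description of homogeneous solutions in the outlets. This is precisely the machinery quoted from \cite{Naz96,ZAA99}, which is why the proposition is presented as a corollary of those results rather than proved from first principles.
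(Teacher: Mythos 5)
Your proposal is correct and takes essentially the same route as the paper: the paper establishes Proposition \ref{tverd1} simply by invoking Lemma 4.1, Remarks 4.1--4.2 and Corollary 4.1 of \cite{ZAA99} (in the framework of \cite{Naz96}), and your argument is a faithful reconstruction of exactly that cited machinery --- continuity of the right-hand-side functional on $\mathcal{H}$ under the stated weighted hypotheses, kernel and cokernel spanned by the constants (which indeed lie in $\mathcal{H}$ since $\rho \in L^2$ along the outlets), the compatibility condition $L(1)=0$ yielding (\ref{solvability}), and separation of variables in the outlets (where the data are compactly supported by the cut-off construction) giving the exponential asymptotics (\ref{inner_asympt_general}) with the additive constant fixed by normalizing the limit at $\xi\to-\infty$ to zero. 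Nothing essential is missing from your sketch relative to what the paper itself supplies.
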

\begin{tverd}\label{tverd2}
The corresponding homogeneous problem for problem (\ref{junc_probl_general})
\begin{equation}\label{hom_probl}
  -\Delta_{\xi\eta}\mathfrak{N} = 0 \ \ \text{in} \ \ \Xi, \qquad
  \partial_\nu \mathfrak{N} = 0 \ \ \text{on} \ \ \partial \Xi,
\end{equation}
has a solution $\mathfrak{N}_0$ that does not belong to the space
$ {\cal H}$ and it has  the following differentiable asymptotics:
\begin{equation}\label{inner_asympt_hom_solution}
\mathfrak{N}_0(\xi,\eta)=\left\{
\begin{array}{rl}
\frac{1}{h_1} \, \xi + {\cal O}(\exp( \frac{\pi}{h_1}\xi)) & \mbox{as} \ \ \xi\to-\infty,
\\[2mm]
 C_0 +  \frac{1}{h_2} \, \xi + {\cal O}(\exp(-\frac{\pi}{h_2}\xi)) & \mbox{as} \ \ \xi\to+\infty.
\end{array}
\right.
\end{equation}
Any other solution to the homogeneous problem, which has polynomial growth at infinity, can be presented as a linear combination
$ \alpha_1 + \alpha_0 \mathfrak{N}_0.$
\end{tverd}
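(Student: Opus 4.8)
\noindent\emph{Sketch of the intended argument.}

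The plan is to build $\mathfrak{N}_0$ as an explicit linear profile plus a decaying correction furnished by Proposition~\ref{tverd1}. Using the cut-off functions $\chi_1,\chi_2$ introduced in (\ref{new-solution}), I set
\[
G(\xi)=\tfrac{1}{h_1}\,\xi\,\chi_1(\xi)+\tfrac{1}{h_2}\,\xi\,\chi_2(\xi).
\]
Since each $\chi_i$ is constant outside a compact $\xi$-interval lying in the corresponding half-strip and vanishes near the joint, the function $G$ depends on $\xi$ alone where it is nonzero, obeys $\partial_\nu G=0$ on $\partial\Xi$, and $\widetilde F:=-\Delta_{\xi\eta}G$ is smooth with compact support contained in the transition zones of $\chi_1,\chi_2$; in particular $\rho^{-1}\widetilde F\in L^2(\Xi)$. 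I then seek $\mathfrak{N}_0=G+\widetilde{\mathfrak N}$, so that $\widetilde{\mathfrak N}$ must solve problem (\ref{junc_probl_general}) with right-hand side $\widetilde F$ and zero Neumann data.

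The decisive point is that the solvability condition (\ref{solvability}) holds. Because the boundary data vanish, it reduces to $\int_\Xi\widetilde F\,d\xi d\eta=0$. A direct integration, using $\chi_i\equiv0$ near the joint and $\chi_i\equiv1$ at the far end, shows that the contribution of $\Xi^{(1)}$ equals $+1$ and that of $\Xi^{(2)}$ equals $-1$, so their sum vanishes — this is exactly why the coefficients $1/h_1$ and $1/h_2$ (equal fluxes $\pm1$ through the cross-sections) are the forced ones. Proposition~\ref{tverd1} then yields $\widetilde{\mathfrak N}\in\mathcal H$ with the differentiable asymptotics (\ref{inner_asympt_general}), i.e. $\widetilde{\mathfrak N}=\mathcal O(\exp(\tfrac{\pi}{h_1}\xi))$ as $\xi\to-\infty$ and $\widetilde{\mathfrak N}=\delta^++\mathcal O(\exp(-\tfrac{\pi}{h_2}\xi))$ as $\xi\to+\infty$. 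Adding $G$ produces precisely (\ref{inner_asympt_hom_solution}) with $C_0:=\delta^+$. Finally $\mathfrak{N}_0\notin\mathcal H$: its linear growth makes $\int_\Xi|\mathfrak{N}_0|^2\rho^2\,d\xi d\eta\sim\int\xi^2(1+|\xi|)^{-2}\,d\xi$ diverge, whereas the correction $\widetilde{\mathfrak N}$ lies in $\mathcal H$.

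For the characterization, let $\mathfrak N$ solve (\ref{hom_probl}) with polynomial growth. Separating variables in each half-strip with Neumann data on the horizontal sides, the transverse eigenfunctions are $\cos\big(\tfrac{n\pi}{h_i}(\eta+\tfrac{h_i}2)\big)$, and polynomial growth forces all exponentially increasing modes to vanish; hence $\mathfrak N=A^{(1)}+B^{(1)}\xi+\mathcal O(\exp(\tfrac{\pi}{h_1}\xi))$ as $\xi\to-\infty$ and $\mathfrak N=A^{(2)}+B^{(2)}\xi+\mathcal O(\exp(-\tfrac{\pi}{h_2}\xi))$ as $\xi\to+\infty$, with the derivatives enjoying the same exponential remainders. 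Conservation of flux — obtained by integrating $-\Delta_{\xi\eta}\mathfrak N=0$ over $\Xi\cap\{|\xi|<R\}$ and using $\partial_\nu\mathfrak N=0$ on the lateral boundary — gives $h_1B^{(1)}=h_2B^{(2)}=:\alpha_0$. Then $v:=\mathfrak N-\alpha_0\mathfrak N_0$ is harmonic, satisfies the homogeneous Neumann condition, and tends to finite constants at both infinities with exponentially decaying gradient.

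It remains to show that $v$ is constant. Setting $w:=v-\lim_{\xi\to-\infty}v$, I apply the energy identity $\int_{\Xi\cap\{|\xi|<R\}}|\nabla w|^2\,d\xi d\eta=\int_{\Upsilon_2}w\,\partial_\xi w\big|_{\xi=R}\,d\eta-\int_{\Upsilon_1}w\,\partial_\xi w\big|_{\xi=-R}\,d\eta$; since $w$ is bounded and $\partial_\xi w$ decays exponentially, both cross-section terms vanish as $R\to+\infty$, whence $\nabla w\equiv0$ and therefore $w\equiv0$. Consequently $v\equiv\alpha_1$ for a constant $\alpha_1$ and $\mathfrak N=\alpha_1+\alpha_0\mathfrak N_0$. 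The main obstacle is the reduction itself: verifying that the flux-balanced profile $G$ makes the solvability condition (\ref{solvability}) hold (so that Proposition~\ref{tverd1} applies) and that the correction carries the exact exponential rates $\pi/h_i$; the energy argument in the unbounded domain is then routine, once differentiability of the asymptotics guarantees the decay of the boundary terms at infinity.
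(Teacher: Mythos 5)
Your proposal is correct in substance, and it is genuinely a different route from the paper's, because the paper gives no proof of Proposition~\ref{tverd2} at all: it imports it (together with Propositions~\ref{tverd1} and \ref{tverd3}) from Lemma~4.1, Remarks~4.1--4.2 and Corollary~4.1 of \cite{ZAA99}. Your argument supplies a self-contained derivation where the paper delegates. Both halves of your scheme are sound: for existence, you peel off the cut-off linear profile $G(\xi)=h_1^{-1}\xi\,\chi_1(\xi)+h_2^{-1}\xi\,\chi_2(\xi)$, check that the induced compactly supported right-hand side integrates to zero precisely because the two cross-sectional fluxes $h_i\cdot h_i^{-1}=\pm 1$ balance, and invoke Proposition~\ref{tverd1} to get an exponentially stabilizing corrector, which yields \eqref{inner_asympt_hom_solution} with $C_0=\delta^+$ and shows $\mathfrak{N}_0\notin\mathcal H$ by the divergence of the weighted norm; for the classification, the cosine expansion in each half-strip reduces any polynomially growing solution to $A^{(i)}+B^{(i)}\xi$ plus exponentially small (differentiable) remainders, flux conservation through the junction gives $h_1B^{(1)}=h_2B^{(2)}=:\alpha_0$, and the Saint-Venant--type energy identity, with vanishing cross-sectional boundary terms, eliminates the residual bounded harmonic function. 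What your route buys is transparency: the coefficients $1/h_1$, $1/h_2$ in \eqref{inner_asympt_hom_solution} are visibly forced by flux balance, and the rates $\pi/h_i$ come straight from the first nonzero transverse Neumann eigenvalue; what the paper's citation buys is brevity and a uniform treatment of all three propositions inside the weighted-space framework of \cite{ZAA99}.

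One small slip to fix: with your definition $\widetilde F:=-\Delta_{\xi\eta}G$, the corrector must solve $-\Delta_{\xi\eta}\widetilde{\mathfrak N}=-\widetilde F$ (equivalently, define $\widetilde F:=+\Delta_{\xi\eta}G$, matching the sign convention of the paper's $\widetilde F_1$ in problem \eqref{junc_probl_general}); as written, $-\Delta_{\xi\eta}(G+\widetilde{\mathfrak N})=2\widetilde F\neq 0$ on the transition zones of the cut-offs. The solvability check \eqref{solvability} is insensitive to this sign, so nothing else in your argument changes.
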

\begin{tverd}\label{tverd3}
 If the domain $\Xi$ is symmetric about the horizontal axis, the function $\widetilde{F}_k$ is even with respect to the variable $\eta$ $(\widetilde{F}_k$ is odd with respect to  $\eta)$ and
$\widetilde{B}^{(i)}_{k_-}\equiv - \widetilde{B}^{(i)}_{k_+}, \ i=1, 2$ $(\widetilde{B}^{(i)}_{k_-}\equiv \widetilde{B}^{(i)}_{k_+}, \ i=1, 2),$ then solution $\widetilde{ N}_k$ is an even (odd)
function with respect to $\eta.$ If $\widetilde{ N}_k$ is an odd function, then the constant  $\delta_k^+$ in (\ref{inner_asympt_general})  is equal to zero.
\end{tverd}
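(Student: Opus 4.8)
The plan is to prove the symmetry by a reflection argument combined with the uniqueness-up-to-an-additive-constant statement of Proposition~\ref{tverd1}. Introduce the reflection $R\colon(\xi,\eta)\mapsto(\xi,-\eta)$. Since $\Xi$ is symmetric about the horizontal axis and the weight $\rho$ depends only on $\xi$, the map $v\mapsto v\circ R$ is a linear isometry of $\mathcal{H}$ onto itself: it preserves both the Dirichlet integral $\int_\Xi|\nabla v|^2$ and the weighted term $\int_\Xi|v|^2\rho^2$. Hence it maps weak solutions in $\mathcal{H}$ to functions in $\mathcal{H}$ and permutes the admissible test functions, which is all that is needed to run the argument inside the weak formulation (\ref{integr}).

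For the even case I would set $V:=\widetilde{N}_k\circ R$, i.e. $V(\xi,\eta)=\widetilde{N}_k(\xi,-\eta)$, and verify that $V$ is again a weak solution of (\ref{junc_probl_general}). Plugging the test function $v\circ R$ into the identity (\ref{integr}) satisfied by $\widetilde{N}_k$ and changing variables $\eta\mapsto-\eta$, the left-hand side becomes $\int_\Xi\nabla V\cdot\nabla v\,d\xi d\eta$ (Dirichlet invariance), the volume term on the right reproduces $\int_\Xi\widetilde{F}_k\,v\,d\xi d\eta$ by evenness of $\widetilde{F}_k$, and the two boundary integrals are carried into one another because $R$ swaps the traces at $\eta=+\tfrac{h_i}{2}$ and $\eta=-\tfrac{h_i}{2}$. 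Thus $V$ and $\widetilde{N}_k$ are both weak solutions, so by Proposition~\ref{tverd1} they differ by an additive constant, $V=\widetilde{N}_k+c$. Letting $\xi\to-\infty$ and using that both tend to $0$ by (\ref{inner_asympt_general}) forces $c=0$, whence $\widetilde{N}_k(\xi,-\eta)=\widetilde{N}_k(\xi,\eta)$, i.e. $\widetilde{N}_k$ is even.

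For the odd case I would repeat the computation with $W:=-\,\widetilde{N}_k\circ R$. The extra sign combined with the oddness of $\widetilde{F}_k$ recovers $+\int_\Xi\widetilde{F}_k\,v$, and the reflected boundary terms again reproduce the boundary functional; so $W$ is a weak solution, $W=\widetilde{N}_k+c$, and the decay at $-\infty$ gives $c=0$, i.e. $\widetilde{N}_k$ is odd. Finally, if $\widetilde{N}_k$ is odd then $\widetilde{N}_k(\xi,0)=0$ at every point $(\xi,0)\in\Xi$, in particular for $\xi>\tfrac{l}{2}$ (the axis $\eta=0$ belongs to $\Upsilon_2$); passing to the limit $\xi\to+\infty$ in (\ref{inner_asympt_general}) then yields $0=\delta_k^+$. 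Equivalently, the identity $W=\widetilde{N}_k$ compared at $+\infty$ gives $-\delta_k^+=\delta_k^+$, so $\delta_k^+=0$.

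The isometry of $R$ on $\mathcal{H}$ and the invariance of the Dirichlet integral are routine. The step that needs care — and the only genuine bookkeeping — is unfolding the abbreviated $\mp$ notation in (\ref{integr}) and checking that the reflection-induced exchange of the upper and lower traces, together with the parity hypotheses $\widetilde{B}^{(i)}_{k_-}\equiv-\widetilde{B}^{(i)}_{k_+}$ (even case) and $\widetilde{B}^{(i)}_{k_-}\equiv\widetilde{B}^{(i)}_{k_+}$ (odd case), reproduces the boundary functional with the correct sign rather than its negative. Getting this sign right in each parity case is the crux; once it is established, the uniqueness part of Proposition~\ref{tverd1} does the rest.
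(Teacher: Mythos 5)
Your proof is correct. Note that the paper itself contains no in-text argument for Proposition \ref{tverd3}: it is quoted, together with Propositions \ref{tverd1} and \ref{tverd2}, from lemma 4.1 and the adjacent remarks of \cite{ZAA99}, so your reflection-plus-uniqueness scheme is in fact a self-contained substitute for the citation, and it is the standard route to such parity statements. The sign bookkeeping you flagged as the crux does check out: unfolding the $\mp$ notation, the boundary functional in (\ref{integr}) reads $L(v)=\sum_{i=1}^{2}\bigl(-\int \widetilde{B}^{(i)}_{k_+}(\xi)\,v(\xi,+\tfrac{h_i}{2})\,d\xi+\int \widetilde{B}^{(i)}_{k_-}(\xi)\,v(\xi,-\tfrac{h_i}{2})\,d\xi\bigr)$; testing with $v\circ R$ swaps the two traces, so in the even case the hypothesis $\widetilde{B}^{(i)}_{k_-}\equiv-\widetilde{B}^{(i)}_{k_+}$ turns the reflected functional into $-\int\widetilde{B}^{(i)}_{k_+}\,v(\xi,-\tfrac{h_i}{2})-\int\widetilde{B}^{(i)}_{k_+}\,v(\xi,+\tfrac{h_i}{2})=L(v)$, while in the odd case the overall minus sign coming from $W=-\widetilde{N}_k\circ R$ combines with $\widetilde{B}^{(i)}_{k_-}\equiv\widetilde{B}^{(i)}_{k_+}$ to reproduce $L(v)$ again. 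Three small points are worth recording to make the argument airtight. First, constants belong to $\mathcal{H}$ (approximate by cut-offs in $\xi$; the gradient term is ${\cal O}(R^{-1})$ and $\rho$ is square-integrable in $\xi$ on the half-strips), so $V-\widetilde{N}_k\in\mathcal{H}$ solves the homogeneous problem (\ref{hom_probl}) and Proposition \ref{tverd2} forces it to be a constant, since $\mathfrak{N}_0\notin\mathcal{H}$ — this is the precise justification of the step ``$V=\widetilde{N}_k+c$''. Second, in the odd case the solvability condition (\ref{solvability}) holds automatically (the volume integral vanishes by oddness of $\widetilde{F}_k$ over the symmetric domain, and the two boundary contributions cancel pairwise), so existence of $\widetilde{N}_k$ is not an extra assumption there. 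Third, of your two derivations of $\delta_k^+=0$, the comparison of $W=\widetilde{N}_k$ at $\xi\to+\infty$, giving $-\delta_k^+=\delta_k^+$, is preferable to evaluating $\widetilde{N}_k(\xi,0)$, since the latter tacitly invokes interior continuity of the weak solution — true by elliptic regularity, but an avoidable extra step; both limits are legitimate because the asymptotics (\ref{inner_asympt_general}) is differentiable, hence uniform in $\eta$.
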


\begin{remark}\label{remark_constant}
Using the second Green-Ostrogradsky formula, similarly as was done in remark~4.3 (\cite{ZAA99}),
constant $\delta_k^+ \, (k\in\Bbb{N})$ in (\ref{inner_asympt_general}) can be found as follows
\begin{multline}\label{const_d_0}
\delta_k^+ = \frac{1}{h_2} \Bigg(
\int\limits_{\Xi} \xi \, \widetilde{F}_k(\xi, \eta) \, d\xi d\eta
\mp \int\limits^{-\frac{l}{2}}_{-\infty} \xi \, \widetilde{B}^{(1)}_{k_\pm}(\xi) \, d\xi
\mp \int\limits_{\frac{l}{2}}^{+\infty} \xi \widetilde{B}^{(2)}_{k_\pm}(\xi) \, d\xi \,-
\int\limits_\Gamma \partial_\nu(\xi)\, \widetilde{ N}_k (\xi,\eta) \ d\sigma_{\xi\eta} \Bigg).
\end{multline}
\end{remark}

It follows from Proposition \ref{tverd2} that problem (\ref{junc_probl_general}) at $k=0$ has a solution if and only if
\begin{equation}\label{trans0}
\omega_2^{(1)} (0) = \omega_2^{(2)} (0);
\end{equation}
in this case
\begin{equation}\label{N_0}
N_0 \equiv \widetilde{N}_0 \equiv \omega_2^{(1)} (0).
\end{equation}

Let us verify the solvability condition  (\ref{solvability}).
Taking into account  the  third relation in problems (\ref{regul_probl_2}) and (\ref{regul_probl_k}), the equality  (\ref{solvability}) can be re-written as follows:
$$
h_2 \frac{d\omega_{k}^{(2)}}{dx}(0) - h_1 \frac{d\omega_{k}^{(1)}}{dx}(0)
$$
$$
+ h_1 \int\limits_{-\frac{l}{2} -2}^{-\frac{l}{2} -1}
\frac{\xi^{k-1}}{(k-1)!}\frac{d^k\omega_2^{(1)}}{dx^k}(0) \, \chi_1^{\prime}(\xi) \,d\xi
+
h_2 \int\limits_{\frac{l}{2} +1}^{\frac{l}{2} +2}
\frac{\xi^{k-1}}{(k-1)!}\frac{d^k\omega_2^{(2)}}{dx^k}(0) \, \chi_2^{\prime}(\xi) \,d\xi
$$
$$
- \ h_1 \int\limits_{-\infty}^{-\frac{l}2} \frac{\xi^{k-2}}{(k-2)!} \dfrac{d^k\omega_2^{(1)}}{dx^k}(0) (1-\chi_1(\xi)) \ d\xi \ - \ h_2
\int\limits_{\frac{l}2}^{+\infty} \frac{\xi^{k-2}}{(k-2)!} \dfrac{d^k\omega_2^{(2)}}{dx^k}(0) (1-\chi_2(\xi)) \ d\xi
$$
$$
+ \ \int_{\Xi^{(0)}} \frac{\xi^{k-2}}{(k-2)!}\dfrac{\partial^{k-2}f}{\partial x^{k-2}}(0, \eta) \ d\xi d\eta \ = \ 0, \quad k\in\Bbb N, \ \ k \geq2.
$$
Whence, integrating by parts in the first two integrals with regard to (\ref{omega_probl_2}), we obtain the following relations for $\{\omega_k^{(i)}\}:$
\begin{equation}\label{transmisiont1}
h_2 \frac{d\omega_{k}^{(2)}}{dx}(0) - h_1 \frac{d\omega_{k}^{(1)}}{dx}(0) = -d_k^*, \quad k\in\Bbb N, \ \ k \geq2,
\end{equation}
where $d_2^* = 0,$
\begin{multline}\label{const_d_*}
d_k^* = \sum\limits_{i=1}^2 (-1)^{i+1}\frac{((-1)^{i}\tfrac{l}{2})^{k-2}}{(k-2)!}
\Big( \int_{\Upsilon_i}\dfrac{\partial^{k-3}f}{\partial x^{k-3}}(0, \eta) \ d\eta \mp
\dfrac{d^{k-3}\varphi_\pm^{(i)}}{dx^{k-3}}(0) \Big)
\\
+
\int_{\Xi^{(0)}} \frac{\xi^{k-3}}{(k-3)!}\dfrac{\partial^{k-3}f}{\partial x^{k-3}}(0, \eta) \ d\xi d\eta,
\quad k\in\Bbb N, \ \ k \geq3.
\end{multline}

Hence, if the functions $\omega_k^{(1)}$ and $\omega_k^{(2)}$ satisfy (\ref{transmisiont1}), then there exist a weak solution of the problem (\ref{junc_probl_general}). According to Proposition  \ref{tverd1}, it can be chosen in a unique way to guarantee the asymptotics (\ref{inner_asympt_general}).
However, we do not take into account the limit relations at infinity  in (\ref{junc_probl_general}) (see the forth condition). In order to satisfy them we add $\omega_{k+2}^{(1)}(0)$ to our solution (Proposition  \ref{tverd1} gives us that possibility) and derive the following conditions:
\begin{equation}\label{trans1}
 \omega_k^{(1)}(0) + \delta_{k-2}^+ = \omega_k^{(2)}(0) \ , \quad k\in\Bbb N, \ \ k \geq3.
\end{equation}
As a result, we get the solution of the problem (\ref{junc_probl_n}) with the following asymptotics:
\begin{equation}\label{inner_asympt}
{N}_{k}(\xi,\eta)=\left\{
\begin{array}{rl}
\omega_{k+2}^{(1)}(0) + \Psi_k^{(1)}(\xi, \eta) +
{\cal O}(\exp( \frac{\pi}{h_1}\xi)) & \mbox{as} \ \ \xi\to-\infty, \\[2mm]
\omega_{k+2}^{(2)}(0) + \Psi_k^{(2)}(\xi, \eta) +
{\cal O}(\exp(-\frac{\pi}{h_2}\xi)) & \mbox{as} \ \ \xi\to+\infty.
\end{array}
\right.
\end{equation}

Let us denote by
$$
G_k(\xi,\eta) :=
\left\{\begin{array}{rl}
\omega_{k+2}^{(1)}(0) + \Psi_k^{(1)}(\xi,\eta), & \xi < 0, \\[2mm]
\omega_{k+2}^{(2)}(0) + \Psi_k^{(2)}(\xi,\eta), & \xi > 0,
\end{array}\right.  \quad k\in \Bbb N.
$$
\begin{remark}\label{rem_exp-decrease}
Due to (\ref{inner_asympt}),
functions $\{{N}_{k} - G_k\}_{k\in \Bbb N}$ are exponentially decrease as $\xi \to \pm\infty.$
\end{remark}


 \subsection{Limit problem}

Relations (\ref{trans0}), (\ref{trans1}) together with (\ref{transmisiont1}), (\ref{omega_probl_2}), (\ref{omega_probl_k}), (\ref{bv_left}) and (\ref{bv_right}) complete boundary-value problems to
determine the functions $\{\omega_k^{(i)}\}.$

So for the functions $\omega_2^{(1)}$ and $\omega_2^{(2)}$ that form the main term of the regular asymptotic expansion (\ref{regul}), we obtain the following problem:
\begin{equation}\label{main}
\left\{\begin{array}{rcl}
- h_i\dfrac{d^2\omega_2^{(i)}}{dx^2}(x) & = &  \widehat{F}^{(i)}(x),  \qquad x\in I_i, \ \ i=1, 2,
\\[3mm]
\omega_2^{(1)}(0) & = & \omega_2^{(2)}(0),
\\[2mm]
h_1 \dfrac{d\omega_2^{(1)}}{dx}(0) & = & h_2 \dfrac{d\omega_2^{(2)}}{dx}(0),
\\[4mm]
\omega_2^{(1)}(-1) & = & 0, \quad \omega_2^{(2)}(1)  \ = \ 0 ,
\end{array}\right.
\end{equation}
where $I_1=(-1,0),$ $I_2=(0,1),$
\begin{equation}\label{right-hand-side}
\widehat{F}^{(i)}(x):= \int_{\Upsilon_i}f(x,\eta)\, d\eta - \varphi_+^{(i)}(x) + \varphi_-^{(i)}(x), \quad \ x\in I_i, \ \ i=1, 2.
\end{equation}
The problem (\ref{main}) is  called {\it limit problem} for problem (\ref{probl}).  The solution to (\ref{main}) is given by the following formulas:
\begin{multline}
\omega_2^{(1)}(x) = \frac{1}{h_1}\int\limits_{-1}^x (s-x)\widehat{F}^{(1)}(s)ds
\\
- \frac{(x+1)}{h_1+h_2}\Big( \int\limits_{-1}^0 \big( \tfrac{h_2}{h_1}s-1 \big)\widehat{F}^{(1)}(s)ds +
\int\limits_0^1 (1-s)\widehat{F}^{(2)}(s)ds \Big), \ \ x\in I_1;
\end{multline}
\begin{multline}
\omega_2^{(2)}(x) = \frac{1}{h_2}\int\limits_{x}^1 (s-x)\widehat{F}^{(2)}(s)ds
\\
-
\frac{(1-x)}{h_1+h_2}\Big(  \int\limits_{0}^1 \big( \tfrac{h_1}{h_2}s+1 \big)\widehat{F}^{(2)}(s)ds -
\int\limits_{-1}^0 (1+s)\widehat{F}^{(1)}(s)ds \Big), \ \ x\in I_2.
\end{multline}

For next functions  $\{\omega_k^{(1)}, \  \omega_k^{(2)}: \  k \geq 3\},$ the problems take the form
\begin{equation}\label{omega_probl*}
\left\{\begin{array}{rcll}
- h_i\dfrac{d^2\omega_k^{(i)}}{dx^2}(x) & = & 0,  &   x\in I_i,   \ \ i=1, 2,
\\[3mm]
\omega_k^{(1)}(0) & = & \omega_k^{(2)}(0) - \delta_{k-2}^+,
\\[2mm]
h_1\dfrac{d\omega_k^{(1)}}{dx}(0) & = & h_2\dfrac{d\omega_k^{(2)}}{dx}(0) + d_k^*,
\\[4mm]
\omega_k^{(1)}(-1) & = & 0, \quad \omega_k^{(2)}(1)  \ = \ 0 .
\end{array}\right.
\end{equation}
It is easy to verify that the solution to problem \eqref{omega_probl*} is given by the formulas
\begin{equation}\label{solutions}
\begin{array}{c}
\omega_k^{(1)}(x) =  \dfrac{(d_k^* - h_2 \, \delta_{k-2}^{+})}{h_1+h_2} \, (x + 1),  \ \ x\in I_1;
\\[5mm]
\omega_k^{(2)}(x) =  \dfrac{(d_k^* + h_1\, \delta_{k-2}^{+})}{h_1+h_2} \, (1 - x),  \ \ x\in I_2.
\end{array}
\end{equation}

\section{Complete asymptotic expansion and its justification}

From the limit problem (\ref{main}) we uniquely determine the first term of the asymptotics $\omega_2$ of  series (\ref{regul}). Next from the
equality (\ref{N_0}) we obtain the first term $N_0$ of the inner asymptotic expansion (\ref{junc}).
Then we rewrite problems (\ref{regul_probl_2}) in the form
\begin{equation}\label{new_regul_probl_2}
\left\{\begin{array}{rcl}
-\partial_{\eta\eta}^2{u}_2^{(i)}(x,\eta)          & = & f(x,\eta) - h_i^{-1} \widehat{F}^{(i)}(x), \quad \eta\in\Upsilon_i,\\[2mm]
-\partial_{\eta}u_2^{(i)}(x,\eta)|_{\eta=\pm\frac{h_i}{2}} & = & \varphi_\pm^{(i)}(x), \quad x\in I_i \\[2mm]
\langle u_2^{(i)}(x,\cdot) \rangle_{\Upsilon_i}   & = & 0, \quad x\in I_i,
\end{array}\right. \qquad i=1, 2,
\end{equation}
and find that
\begin{equation}\label{solution_t}
u_2^{(i)}(x,\eta)=-\int_{-\frac{h_i}{2}}^\eta(\eta-t)\Big(f(x,t)
- h_i^{-1} \widehat{F}^{(i)}(x) \Big)dt - \eta\varphi_-^{(i)}(x) + \alpha_2^{(i)}(x),
\end{equation}
where function  $\alpha_2^{(i)}$ are uniquely determined from third condition in (\ref{new_regul_probl_2}), i.e.
$$
\alpha_2^{(i)}(x) = \int_{\Upsilon_i} \int_{-\frac{h_i}{2}}^\eta(\eta-t) \, f(x,t)\, dt\, d\eta
-  6^{-1} h_i^2 \widehat{F}^{(i)}(x), \quad  i= 1, 2;
$$
functions $\widehat{F}^{(1)}$ and $\widehat{F}^{(2)}$  are defined by relations  (\ref{right-hand-side}).

Now with the help of formulas (\ref{view_solution}) and (\ref{view_solution2}), we determine the first terms $\Pi_2^{(1)}$ and $\Pi_2^{(2)}$ of the
boundary-asymptotic expansions (\ref{prim+})
and  (\ref{prim-}) respectively, as solutions of problems  (\ref{prim+probl}) and (\ref{prim-probl}) that can be rewritten as follows:
\begin{equation}\label{new_prim+probl}
 \left\{\begin{array}{rll}
  -\Delta_{\xi\eta}\Pi_2^{(1)}(\xi,\eta)=                      & 0,                  & (\xi,\eta)\in(0,+\infty)\times\Upsilon_1,                                     \\[2mm]
  -\partial_\eta\Pi_2^{(1)}(\xi,\eta)|_{\eta=\pm\frac{h_1}{2}}=& 0,                  & \xi\in(0,+\infty),                                                            \\[2mm]
  \Pi_2^{(1)}(0,\eta)=                                         & -u_2^{(1)}(-1,\eta), & \eta\in\Upsilon_1,                                                            \\[2mm]
  \Pi_2^{(1)}(\xi,\eta)\to                                     & 0,                  & \xi\to+\infty,\ \ \eta\in\Upsilon_1,
 \end{array}\right.
\end{equation}
\begin{equation}\label{new_prim-probl}
 \left\{\begin{array}{rll}
  -\Delta_{\xi^*\eta}\Pi_2^{(2)}(\xi^*,\eta)=                    & 0,                  & (\xi^*,\eta)\in(0,+\infty)\times\Upsilon_2,                                     \\[2mm]
  -\partial_\eta\Pi_2^{(2)}(\xi^*,\eta)|_{\eta=\pm\frac{h_2}{2}}=& 0,                  & \xi^*\in(0,+\infty),                                                           \\[2mm]
  \Pi_2^{(1)}(0,\eta)=                                           & -u_2^{(2)}(1,\eta), & \eta\in\Upsilon_2,                                                              \\[2mm]
  \Pi_2^{(1)}(\xi^*,\eta)\to                                     & 0,                  & \xi^*\to+\infty, \ \ \eta\in\Upsilon_2.
 \end{array}\right.
\end{equation}

The second term  ${N}_1$ of the inner asymptotic expansion (\ref{junc}) is the unique solution of the problem (\ref{junc_probl_n}) that can now be rewritten in the form
\begin{equation}\label{new_junc_probl_n}
 \left\{
 \begin{array}{rcll}
  -\Delta_{\xi\eta}{{N}_1}(\xi, \eta)                                  & =   & 0, &
   \quad (\xi, \eta)\in\Xi,
   \\[2mm]
  \partial_\nu{{N}_1}(\xi, \eta)                             & =   & 0, &
   \quad (\xi, \eta)\in\partial\Xi,
   \\[2mm]
   {N}_1(\xi, \eta) & \sim & \dfrac{d_3^*+(-1)^i h_{3-i}\delta_{1}^+}{h_1+h_2} + \xi\dfrac{d\omega_{2}^{(i)}}{dx}(0), & (-1)^i\xi \to +\infty, \ \eta \in \Upsilon_i, \ \ i=1, 2,
 \end{array}
 \right.
\end{equation}
with asymptotics (\ref{inner_asympt}). Recall that the constant $d_3^*$ is determined by formula (\ref{const_d_*})
and the constant  $\delta_1^+$ is also uniquely determined (see Remark~\ref{remark_constant}) by formula
\begin{equation}\label{delta_1}
\delta_1^+ = - \frac{1}{h_2} \left( \int_\Gamma \partial_\nu \xi \ \widetilde{N}_1 (\xi,\eta) \ d\sigma_{\xi\eta} \right).
\end{equation}

Thus we have uniquely determined the first terms of the asymptotic expansions (\ref{regul}), (\ref{prim+}),  (\ref{prim-}) and (\ref{junc}).

Assume that we have determined coefficients $\omega_2^{(i)},\ldots,\omega_{2n-2}^{(i)},$
$u_2^{(i)}, \, u_4^{(i)},\ldots,u_{2n-2}^{(i)}$ of the series (\ref{regul}),
coefficients
$\Pi^{(i)}_2,  \Pi^{(i)}_4,\ldots,\Pi^{(i)}_{2n-2}$ of the series (\ref{prim+}) and (\ref{prim-}) respectively,
coefficients
${N}_1,\ldots,{N}_{2n-3}$ of the series (\ref{junc}) and constants $\delta_1^+,\ldots,\delta_{2n-3}^+$.

Then, using formulas (\ref{solutions}),  we write the solution $\omega_{2n-1}$ of problem (\ref{omega_probl*})  with the constant  $\delta^+_{2n-3}$ in the first transmission condition.
It should be noted that constants $\{d_k^*\}_{k\geq3}$ depend only on $f$ and $\varphi_{\pm}^{(i)}, \ i=1,2 \ $ and they are uniquely defined  by formulas (\ref{const_d_*}).
Further we find the coefficient ${N}_{2n-2}$ of the inner asymptotic expansion (\ref{junc}),
which is the unique solution of the problem (\ref{junc_probl_n}) that can now be rewritten in the form
\begin{equation}\label{junc_probl_odd_2n-2}
 \left\{
 \begin{array}{l}
 \begin{array}{rcll}
  -\Delta_{\xi\eta}{}{{N}_{2n-2}}(\xi, \eta)                                  & =   &
   \dfrac{\xi^{2n-4}}{(2n-4)!}\dfrac{\partial^{2n-4}f}{\partial x^{2n-4}}(0, \eta), &
   \quad (\xi, \eta)\in\Xi,
   \\[4mm]
  \partial_\nu{{N}_{2n-2}}(\xi, \eta)                             & =   & 0, &
   \quad (\xi, \eta)\in\Gamma,
   \\
  -\partial_\eta{{N}_{2n-2}}(\xi, \eta)|_{\eta=\pm \frac{h_i}{2}} & =   &
   \dfrac{\xi^{2n-4}}{(2n-4)!}\dfrac{d^{2n-4}\varphi_\pm^{(i)}}{dx^{2n-4}}(0), &
   \quad (-1)^i \xi \in (\tfrac{l}{2}, +\infty), \ \ i=1, 2,
 \end{array}
 \\[4mm]
 \qquad \qquad \qquad \quad
  {N}_{2n-2} \ \ \sim \ \,
    \dfrac{d_{2n-2}^*+(-1)^i h_{3-i}\delta_{2n-4}^+}{h_1+h_2} +
    \dfrac{(-1)^{i+1}d_{2n-1}^* +h_{3-i} \ \delta_{2n-3}^+}{h_1+h_2}\,\xi
    \\[3mm]
    \qquad\qquad\qquad\qquad\qquad\qquad + \
    \dfrac{\xi^{2n-2}}{(2n-2)!}\dfrac{d^{2n-2}\omega_2^{(i)}}{dx^{2n-2}}(0) + \sum\limits_{j=0}^{2n-4}
    \dfrac{\xi^j}{j!}\dfrac{\partial^j{u_{2n-2-j}^{(i)}}}{\partial x^j}(0, \eta),
    \\[4mm]
    \qquad\qquad\qquad\qquad\qquad\qquad\qquad\qquad\qquad\qquad\qquad\quad
    \quad
     (-1)^i\xi \to +\infty, \ \eta \in \Upsilon_i, \ \ i=1, 2,
 \end{array}
 \right.
\end{equation}
and ${N}_{2n-2}$ has asymptotics (\ref{inner_asympt}).

Knowing $\delta_{2n-2}^+$ (see (\ref{const_d_0})) and using relations (\ref{solutions}), we get the solution $\omega_{2n}$ of problem (\ref{omega_probl*}).
Next coefficient ${N}_{2n-1}$ of the inner asymptotic expansion (\ref{junc}) is defined as the unique solution to problem~(\ref{junc_probl_n}) that can be rewritten in the form
\begin{equation}\label{junc_probl_odd_2n-2}
 \left\{
 \begin{array}{l}
 \begin{array}{rcll}
  -\Delta_{\xi\eta}{{N}_{2n-1}}(\xi, \eta)                                  & =   &
   \dfrac{\xi^{2n-3}}{(2n-3)!}\dfrac{\partial^{2n-3}f}{\partial x^{2n-3}}(0, \eta), &
   \quad (\xi, \eta)\in\Xi,
   \\[4mm]
  \partial_\nu{{N}_{2n-1}}(\xi, \eta)                             & =   & 0, &
   \quad (\xi, \eta)\in\Gamma,
   \\
  -\partial_\eta{{N}_{2n-1}}(\xi, \eta)|_{\eta=\pm \frac{h_i}{2}} & =   &
   \dfrac{\xi^{2n-3}}{(2n-3)!}\dfrac{d^{2n-3}\varphi_\pm^{(i)}}{dx^{2n-3}}(0), &
   \quad (-1)^i \xi \in (\tfrac{l}{2}, +\infty), \ \ i=1, 2,
 \end{array}
 \\[4mm]
 \qquad \qquad \qquad \quad
  {N}_{2n-1} \ \ \sim \ \,
    \dfrac{d_{2n+1}^*+(-1)^i h_{3-i}\delta_{2n-1}^+}{h_1+h_2} +
    \dfrac{(-1)^{i+1}d_{2n}^* +h_{3-i} \ \delta_{2n-2}^+}{h_1+h_2} \, \xi
    \\[3mm]
    \qquad\qquad\qquad\qquad\qquad\qquad + \
    \dfrac{\xi^{2n-1}}{(2n-1)!}\dfrac{d^{2n-1}\omega_2^{(i)}}{dx^{2n-1}}(0) + \sum\limits_{j=0}^{2n-3}
    \dfrac{\xi^j}{j!}\dfrac{\partial^j{u_{2n-1-j}^{(i)}}}{\partial x^j}(0, \eta),
    \\[4mm]
    \qquad\qquad\qquad\qquad\qquad\qquad\qquad\qquad\qquad\qquad\qquad\quad
    \quad
     (-1)^i\xi \to +\infty, \ \eta \in \Upsilon_i, \ \ i=1, 2,
 \end{array}
 \right.
\end{equation}

Coefficients $u_{2n}^{(i)}, \ i=1, 2,$ are determined as solutions of the following problems:
\begin{equation}\label{new_regul_probl_k}
\left\{\begin{array}{rcl}
-\partial_{\eta\eta}^2{u}_{2n}^{(i)}(x,\eta)    & = &  \partial_{xx}^2{u}_{2n-2}^{(i)}(x,\eta), \quad \eta\in\Upsilon_i, \\[2mm]
-\partial_{\eta}u_{2n}^{(i)}(x,\eta)|_{\eta=\pm\frac{h_i}{2}} & = & 0,     \quad x\in I_i,                       \\[2mm]
\langle u_{2n}^{(i)}(x,\cdot) \rangle_{\Upsilon_i}   & = & 0, \quad x\in I_i.
\end{array}\right.
\end{equation}
We note that solvability condition for problems (\ref{new_regul_probl_k}) takes place, because
$\langle u_{2n-2}^{(i)}(x,\cdot) \rangle_{\Upsilon_i} = 0,$ $i=1,2.$

Finally, we find the coefficients $\Pi_{2n}^{(1)}$ and $\Pi_{2n}^{(2)}$ of the boundary asymptotic expansions (\ref{prim+}) and (\ref{prim-}) respectively as solutions of problems (\ref{prim+probl})  and (\ref{prim-probl}) that can be rewritten in the form
\begin{equation}\label{new_prim+probl}
 \left\{\begin{array}{rll}
  -\Delta_{\xi\eta}\Pi_{2n}^{(1)}(\xi,\eta)=                      & 0,                  & (\xi,\eta)\in(0,+\infty)\times\Upsilon_1,                                     \\[2mm]
  -\partial_\eta\Pi_{2n}^{(1)}(\xi,\eta)|_{\eta=\pm\frac{h_1}{2}}=& 0,                  & \xi\in(0,+\infty),                                                            \\[2mm]
  \Pi_{2n}^{(1)}(0,\eta)=                                         & -u_{2n}^{(1)}(-1,\eta), & \eta\in\Upsilon_1,                                                            \\[2mm]
  \Pi_{2n}^{(1)}(\xi,\eta)\to                                     & 0,                  & \xi\to+\infty,\ \ \eta\in\Upsilon_1,
 \end{array}\right.
\end{equation}
\begin{equation}\label{new_prim-probl}
 \left\{\begin{array}{rll}
  -\Delta_{\xi^*\eta}\Pi_{2n}^{(2)}(\xi^*,\eta)=                    & 0,                  & (\xi^*,\eta)\in(0,+\infty)\times\Upsilon_2,                                     \\[2mm]
  -\partial_\eta\Pi_{2n}^{(2)}(\xi^*,\eta)|_{\eta=\pm\frac{h_2}{2}}=& 0,                  & \xi^*\in(0,+\infty),                                                           \\[2mm]
  \Pi_{2n}^{(1)}(0,\eta)=  & -u_{2n}^{(2)}(1,\eta), & \eta\in\Upsilon_2,                                                              \\[2mm]
  \Pi_{2n}^{(1)}(\xi^*,\eta)\to                                     & 0,                  & \xi^*\to+\infty, \ \ \eta\in\Upsilon_2.
 \end{array}\right.
\end{equation}

Thus we  successively determine all coefficients of series (\ref{regul}), (\ref{prim+}),  (\ref{prim-}) and (\ref{junc}).

\subsection{Justification}

Let us introduce the following notations
$$
u_k\big(x,\frac{y}{\varepsilon}\big)=
\left\{\begin{array}{rl}
u_k^{(1)}\left(x,\frac{y}{\varepsilon}\right), & x<0 \\[2mm]
u_k^{(2)}\left(x,\frac{y}{\varepsilon}\right), & x>0
\end{array}\right. ,
\quad
\omega_k(x)=
\left\{\begin{array}{rl}
\omega_k^{(1)}(x), & x<0 \\[2mm]
\omega_k^{(2)}(x), & x>0
\end{array}\right. , \quad
k \in \Bbb N, \ k \ge 2
$$
and define the coefficients of regular asymptotics as follows:
$$
u_k^*\left(x,\frac{y}{\varepsilon}\right) = u_{k}\left(x,\frac{y}{\varepsilon}\right) + \omega_{k+2}(x),
\quad
k \in \Bbb N_0
\quad
( u_0\equiv u_1 \equiv 0 ).
$$

With the help of the series (\ref{regul}), (\ref{prim+}),  (\ref{prim-}), (\ref{junc}) we construct the following series
\begin{multline}\label{asymp_expansion}
\sum\limits_{k=0}^{+\infty} \varepsilon^{k}\Bigg( \left(1 - \chi_l\left(\frac{x}{\varepsilon^\alpha}\right)\right) u_{k}^*\left(x,\frac{y}{\varepsilon}\right) + \chi_l \left(\frac{x}{\varepsilon^\alpha}\right) {N}_{k}\left(\frac{x}{\varepsilon},\frac{y}{\varepsilon}\right)\Bigg) 
\\
 + \sum\limits_{k=1}^{+\infty} \varepsilon^{2k}\Bigg(
  \chi^-(x)\Pi^{(1)}_{2k}\Big(\frac{1+x}{\varepsilon},\frac{y}{\varepsilon}\Big) + \chi^+(x)\Pi^{(2)}_{2k}\Big(\frac{1-x}{\varepsilon},\frac{y}{\varepsilon}\Big) \Bigg), \quad (x,y)\in\Omega_\varepsilon,
\end{multline}
where $\alpha$ is a fixed number from the interval $(\frac23, 1),$ $\chi_l, \ \chi^\pm$ are smooth cut-off functions defined by formulas
$$
\chi_l(x)=
\left\{\begin{array}{ll}
1, & \text{if} \ \ |x| < l,
\\
0, & \text{if} \ \ |x|  > 2 \, l,
\end{array}\right.
\quad
\chi^\pm(x)=
\left\{\begin{array}{ll}
1, & \text{if} \ \ |1\mp x| \le \delta,
\\
0, & \text{if} \ \ |1\mp x| \ge 2 \delta,
\end{array}\right.
$$
and $\delta$ is a sufficiently small fixed positive number.

\bigskip

\begin{theorem}\label{mainTheorem}
 Series $(\ref{asymp_expansion})$ is the asymptotic expansion for the solution of the boundary-value problem $(\ref{probl})$
 in the Sobolev space $H^1(\Omega_\varepsilon),$ i.e.,
\begin{equation}\label{t0}
    \forall \, m \in\Bbb{N} \ \ \exists \, {C}_m >0 \ \ \exists \, \varepsilon_0>0 \ \ \forall\, \varepsilon\in(0, \varepsilon_0) : \qquad \|u_\varepsilon-U_\varepsilon^{(m)}\|_{H^1(\Omega_\varepsilon)} \leq {C}_m \ \varepsilon^{\alpha (2\, m-\frac{1}{2}) + \frac{1}{2} },
\end{equation}
 where
\begin{multline}\label{aaN}
U^{(m)}_{\varepsilon}(x,y) =
\sum\limits_{k=0}^{2m} \varepsilon^{k}\Bigg( \left(1 - \chi_l\left(\frac{x}{\varepsilon^\alpha}\right)\right) u_{k}^*\left(x,\frac{y}{\varepsilon}\right) + \chi_l \left(\frac{x}{\varepsilon^\alpha}\right) {N}_{k}\left(\frac{x}{\varepsilon},\frac{y}{\varepsilon}\right)\Bigg) 
\\
 + \sum\limits_{k=1}^{m} \varepsilon^{2k}\Bigg(
  \chi^-(x)\Pi^{(1)}_{2k}\Big(\frac{1+x}{\varepsilon},\frac{y}{\varepsilon}\Big) + \chi^+(x)\Pi^{(2)}_{2k}\Big(\frac{1-x}{\varepsilon},\frac{y}{\varepsilon}\Big) \Bigg), \quad (x,y)\in\Omega_\varepsilon,
\end{multline}
is the partial sum of $(\ref{asymp_expansion}).$
\end{theorem}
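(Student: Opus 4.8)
The plan is to reduce the theorem to an estimate of a residual linear functional and then to bound that functional, paying special attention to the narrow matching zone $|x|\asymp\varepsilon^\alpha$, which I expect to produce the dominant contribution and hence the exponent $\alpha(2m-\tfrac12)+\tfrac12$.

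First I would set $R_\varepsilon:=u_\varepsilon-U_\varepsilon^{(m)}$ and observe that, by the construction of the boundary layers $\Pi_{2k}^{(i)}$ together with (\ref{bv_left})–(\ref{bv_right}), the partial sum $U_\varepsilon^{(m)}$ lies in $H^1(\Omega_\varepsilon)$ and its trace on $\{x=\pm1\}$ vanishes up to exponentially small terms; subtracting a harmless exponentially small corrector (or simply carrying those terms as negligible) I may assume $R_\varepsilon|_{x=\pm1}=0$, so that $R_\varepsilon$ is an admissible test function. Substituting $U_\varepsilon^{(m)}$ into the integral identity (\ref{int-identity}) and subtracting gives, for every admissible $\psi$,
$$\int_{\Omega_\varepsilon}\nabla R_\varepsilon\cdot\nabla\psi\,dx\,dy=\mathcal F_\varepsilon(\psi),$$
where $\mathcal F_\varepsilon$ collects the interior discrepancy $\int_{\Omega_\varepsilon}(f+\Delta U_\varepsilon^{(m)})\psi$ and the discrepancy of the Neumann data on the horizontal sides. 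Choosing $\psi=R_\varepsilon$ and invoking the uniform Friedrichs–Poincar\'e inequality $\|R_\varepsilon\|_{L^2(\Omega_\varepsilon)}\le C\|\partial_x R_\varepsilon\|_{L^2(\Omega_\varepsilon)}$ (valid with an $\varepsilon$-independent constant since $\Omega_\varepsilon$ has longitudinal extent of order one and $R_\varepsilon$ vanishes at $x=\pm1$), I get $\|R_\varepsilon\|_{H^1(\Omega_\varepsilon)}\le C\,\sup_{\psi}|\mathcal F_\varepsilon(\psi)|/\|\nabla\psi\|_{L^2(\Omega_\varepsilon)}$, so everything reduces to bounding $\mathcal F_\varepsilon$.

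Second, I would estimate $\mathcal F_\varepsilon$ by splitting $\Omega_\varepsilon$ into the outer zones where $\chi_l\equiv0$, the boundary-layer zones near $x=\pm1$, the inner zone $|x|<l\varepsilon^\alpha$, and the matching zone $|x|\in(l\varepsilon^\alpha,2l\varepsilon^\alpha)$ where $\chi_l'$ and $\chi_l''$ are supported. In the first three zones each coefficient solves its defining problem exactly ((\ref{regul_probl_2})–(\ref{regul_probl_k}), (\ref{prim+probl}), (\ref{prim-probl}), (\ref{junc_probl_n})), so the only residual comes from truncating at order $2m$; these contributions are of pointwise order $\varepsilon^{2m+1}$ and, after the thin-domain factor $\varepsilon^{1/2}$ and Poincar\'e, are $\mathcal O(\varepsilon^{2m+3/2})$, hence subdominant. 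The delicate term is the commutator generated when $-\Delta$ meets $\chi_l(x/\varepsilon^\alpha)$: it equals $-2\nabla\chi_l\cdot\nabla(N_k-u_k^*)-(\Delta\chi_l)(N_k-u_k^*)$, with $\partial_x\chi_l=\varepsilon^{-\alpha}\chi_l'$ and $\Delta\chi_l=\varepsilon^{-2\alpha}\chi_l''$. The crucial structural fact from Section~\ref{inner_asymp} is that the polynomial parts $G_k$ reconstruct exactly the Taylor expansion about $x=0$ of the regular part (this is how (\ref{Psi_k}) arose), while $N_k-G_k$ decays exponentially by Remark~\ref{rem_exp-decrease}. Hence, after summation in $k$, the mismatch $\sum_{k=0}^{2m}\varepsilon^k(N_k-u_k^*)$ in the matching zone reduces to the uncancelled Taylor remainder of the outer expansion plus exponentially small terms, the slowest-decaying piece being the remainder of $\omega_2=u_0^*$, of pointwise size $\mathcal O(|x|^{2m+1})=\mathcal O(\varepsilon^{\alpha(2m+1)})$.

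Finally, combining $\Delta\chi_l\asymp\varepsilon^{-2\alpha}$ with this mismatch yields a pointwise residual of order $\varepsilon^{\alpha(2m-1)}$ in the matching zone; multiplying by the square root of the zone area $\asymp\varepsilon^{(\alpha+1)/2}$ and controlling $\|\psi\|_{L^2}$ by $\|\nabla\psi\|_{L^2}$ through Poincar\'e produces precisely $\mathcal O(\varepsilon^{\alpha(2m-1)+(\alpha+1)/2})=\mathcal O(\varepsilon^{\alpha(2m-1/2)+1/2})$, the claimed rate; the $\nabla\chi_l$ term is checked to be of the same order, and the choice $\alpha\in(\tfrac23,1)$ ensures both that these matching terms dominate the truncation and boundary-layer contributions and that the exponentially small terms (carrying factors $\exp(-c\,\varepsilon^{\alpha-1})$) are negligible. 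The main obstacle is exactly this bookkeeping in the matching zone: one must verify that the reconstruction in (\ref{Psi_k})–(\ref{inner_asympt}) cancels all mismatch terms of order lower than $\varepsilon^{\alpha(2m+1)}$, so that the amplification by $\varepsilon^{-2\alpha}$ does not spoil the estimate, and one must confirm that the Neumann-data discrepancies created by the cut-offs on the horizontal sides are of the same or higher order.
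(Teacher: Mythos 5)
Your overall strategy coincides with the paper's: insert the partial sum $U^{(m)}_\varepsilon$ into problem (\ref{probl}), test the equation for $W_\varepsilon=u_\varepsilon-U^{(m)}_\varepsilon$ with $W_\varepsilon$ itself, apply the uniform Friedrichs inequality, and estimate the residuals zone by zone, the commutator terms supported on $\{l\varepsilon^\alpha\le|x|\le 2l\varepsilon^\alpha\}$ producing the rate $\varepsilon^{\alpha(2m-\frac12)+\frac12}$; your matching-zone analysis (exponential smallness of $N_k-G_k$ by Remark~\ref{rem_exp-decrease}, Taylor remainder of the regular part amplified by $\varepsilon^{-2\alpha}$) reproduces the paper's terms $R^{(m)}_{\varepsilon,2}$, $R^{(m)}_{\varepsilon,5}$, $R^{(m)}_{\varepsilon,6}$. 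Incidentally, the trace of $U^{(m)}_\varepsilon$ at $x=\pm1$ vanishes exactly, by (\ref{bv_left}), (\ref{bv_right}) together with the boundary conditions defining $\Pi^{(i)}_{2k}$, so your exponentially small corrector there is unnecessary (though harmless).

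There is, however, a genuine error in your bookkeeping for the inner zone $\{|x|<l\varepsilon^\alpha\}$, where you claim that ``each coefficient solves its defining problem exactly'' so that the residual there is of pointwise order $\varepsilon^{2m+1}$ and subdominant. This step would fail: the coefficients $N_k$ solve (\ref{junc_probl_n}) with data $F_k$ and $B^{(i)}_{k\pm}$ which are the \emph{Taylor polynomials} of $f$ and $\varphi^{(i)}_\pm$ at $x=0$, not $f$ and $\varphi^{(i)}_\pm$ themselves. Hence $\sum_{k\le 2m}\varepsilon^k(-\Delta)N_k(\tfrac{x}{\varepsilon},\tfrac{y}{\varepsilon})$ reproduces only the degree-$(2m-2)$ Taylor polynomial of $f(\cdot,\eta)$, and the truncated sum leaves throughout the whole inner zone the Taylor remainder --- the paper's term $R^{(m)}_{\varepsilon,4}$ in (\ref{t1_4}) --- of pointwise order $|x|^{2m-1}=\mathcal O(\varepsilon^{\alpha(2m-1)})$, which after multiplication by the square root of the zone's measure contributes $\mathcal O(\varepsilon^{\alpha(2m-\frac12)+\frac12})$ (estimate (\ref{t3_4})), i.e.\ exactly the dominant order, not $\mathcal O(\varepsilon^{2m+\frac32})$. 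The same mechanism produces the Neumann residuals $\breve{R}^{(m)}_{\varepsilon,(i)_\pm}$ of (\ref{breve_R}) on the horizontal sides where $\chi_l\neq0$: they stem from the Taylor truncation of $\varphi^{(i)}_\pm$, not from the cut-offs (which depend only on $x$ and leave the vertical Neumann data untouched), and they require a trace estimate as in (\ref{t2_7}), (\ref{t3_7}). Because these overlooked terms happen to carry the same order as the matching-zone commutators you do estimate, your final exponent is correct; but your dominance claim (``the matching zone produces the dominant contribution, all interior residuals are negligible'') is false as stated, and repairing it requires precisely the paper's terms $R^{(m)}_{\varepsilon,4}$ and $\breve{R}^{(m)}_{\varepsilon,(i)_\pm}$.
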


\begin{remark}
Hereinafter, all constants in inequalities are independent of the parameter~$\varepsilon.$
\end{remark}
\begin{proof}
Take an arbitrary $m\in\Bbb{N}$. Substituting the partial sum $ U^{(m)}_{\varepsilon}$
in the equations and the boundary conditions of problem~(\ref{probl}) and taking into account  relations (\ref{main})--(\ref{new_prim-probl}) for the coefficients of series (\ref{asymp_expansion}), we find
\begin{equation}\label{t1}
\Delta U^{(m)}_{\varepsilon}(x,y) + f\left(x,\frac{y}{\varepsilon}\right) =
 \sum\limits_{j=1}^{6} R^{(m)}_{\varepsilon, j}(x,y) =: R^{(m)}_\varepsilon(x,y).
\end{equation}
where
\begin{equation}\label{t1_1}
R^{(m)}_{\varepsilon, 1}(x,y) = \varepsilon^{2m} \left(1 - \chi_l\left(\frac{x}{\varepsilon^\alpha}\right)\right) \dfrac{d^{2} u_{2m}}{dx^2}\left(x,\frac{y}{\varepsilon}\right),
\end{equation}
\begin{multline}\label{t1_2}
R^{(m)}_{\varepsilon, 2}(x,y) = \sum\limits_{k=1}^{2m}\varepsilon^k \Bigg(
2 \varepsilon^{-1-\alpha} \frac{d\chi_l}{d\zeta}(\zeta) \Big( \partial_{\xi}{N}_{k}(\xi,\eta) - \partial_{\xi}G_{k}(\xi,\eta) \Big)
\\
+
\varepsilon^{-2\alpha} \frac{d^2\chi_l}{d\zeta^2}(\zeta) \Big( {N}_{k}(\xi,\eta) - G_{k}(\xi,\eta) \Big)
\Bigg) \Bigg|_{\zeta=\frac{x}{\varepsilon^{\alpha}},\, \xi=\frac{x}{\varepsilon},\, \eta=\frac{y}{\varepsilon}}
\end{multline}
\begin{multline}\label{t1_3}
R^{(m)}_{\varepsilon, 3}(x,y) = \sum\limits_{k=1}^{m} \varepsilon^{2k} \Bigg( \Big( 2 \varepsilon^{-1}\frac{d\chi^-}{dx}(x) \partial_{\xi}\Pi^{(1)}_{2k}(\xi,\eta) + \frac{d^2\chi^-}{dx^2}(x)\Pi^{(1)}_{2k}(\xi,\eta) \Big) \Big|_{\xi=\frac{1+x}{\varepsilon},\, \eta=\frac{y}{\varepsilon}}
\\
+ \Big( 2 \varepsilon^{-1}\frac{d\chi^+}{dx}(x)\partial_{\xi}\Pi^{(2)}_{2k}(\xi,\eta) + \frac{d^2\chi^+}{dx^2}(x)\Pi^{(2)}_{2k}(\xi,\eta) \Big) \Big|_{\xi=\frac{1-x}{\varepsilon},\, \eta=\frac{y}{\varepsilon}} \Bigg),
\end{multline}
\begin{equation}\label{t1_4}
R^{(m)}_{\varepsilon, 4}(x,y) = \varepsilon^{\alpha(2m-1)} \chi_l\left(\frac{x}{\varepsilon^\alpha}\right) \frac{1}{(2m-2)!} \ \varepsilon^{-\alpha}  \int\limits_0^x \left(\frac{x-z}{\varepsilon^\alpha}\right)^{2m-2} \dfrac{\partial^{2m-1}f}{\partial z^{2m-1}}\left(z,\frac{y}{\varepsilon}\right) dz,
\end{equation}
\begin{multline}\label{t1_5}
R^{(m)}_{\varepsilon, 5}(x,y) = \varepsilon^{\alpha(2m-1)}  \Bigg( \varepsilon^{(1-\alpha)2m} \left( - \frac{d \omega_{2m+2}}{dx}(0) \ - \frac{\partial u_{2m}}{\partial x}\left(x,\frac{y}{\varepsilon}\right) \right)
\\
-
\frac{1}{(2m-1)!} \ \varepsilon^{-\alpha} \int\limits_0^x \left(\frac{x-z}{\varepsilon^\alpha}\right)^{2m-1} \dfrac{d^{2m+1}\omega_2}{d z^{2m+1}}(z) \ dz
\\
-
\sum\limits_{j=1}^{m-1} \frac{\varepsilon^{(1-\alpha)2j}}{(2m-2j-1)!} \ \varepsilon^{-\alpha} \int\limits_0^{x} \left(\frac{x-z}{\varepsilon^\alpha}\right)^{2m-2j-1} \frac{\partial^{2m-2j+1}u_{2j}}{\partial z^{2m-2j+1}}\left(z,\frac{y}{\varepsilon}\right) \ dz \Bigg)\cdot 2 \frac{d \chi_l}{d\zeta} (\zeta)|_{\zeta=\frac{x}{\varepsilon^\alpha}},
\end{multline}
\begin{multline}\label{t1_6}
R^{(m)}_{\varepsilon, 6}(x,y) = \varepsilon^{\alpha(2m-1)}
\Bigg( - \varepsilon^{(1-\alpha)2m-\alpha} x \frac{d \omega_{2m+2}}{dx}(0)
-
\frac{1}{(2m)!} \ \varepsilon^{-\alpha} \int\limits_0^x \left(\frac{x-z}{\varepsilon^\alpha}\right)^{2m} \dfrac{d^{2m+1}\omega_2}{d z^{2m+1}}(z) \ dz
\\
-
\sum\limits_{j=1}^{m} \frac{\varepsilon^{(1-\alpha)2j}}{(2m-2j)!} \ \varepsilon^{-\alpha} \int\limits_0^x \left(\dfrac{x-z}{\varepsilon^\alpha}\right)^{2m-2j} \dfrac{\partial^{2m-2j+1} u_{2j}}{\partial z^{2m-2j+1}}\left(z,\frac{y}{\varepsilon}\right) \ dz \Bigg) \cdot \frac{d^2 \chi_l}{d\zeta^2} (\zeta)|_{\zeta=\frac{x}{\varepsilon^\alpha}}.
\end{multline}

From (\ref{t1_1}) we conclude that
\begin{equation}\label{t2_1}
\exists\, \check{C}_m>0  \ \ \exists \, \varepsilon_0>0 \ \ \forall\, \varepsilon\in(0, \varepsilon_0) : \quad
\sup\limits_{(x,y)\in\Omega_{\varepsilon}}\left|R_{\varepsilon, 1}^{(m)}(x,y)\right| \leq \check{C}_m \varepsilon^{2m}.
\end{equation}
Due to the exponential decreasing of functions $\{{N}_{k} - {G}_{k},  \Pi^{(1)}_{k}, \Pi^{(2)}_{k}\}$ (see Remark~\ref{rem_exp-decrease} and (\ref{as_estimates})) and the fact that the support of the derivatives of cut-off function $\chi_l$ belongs to the set $\{x: l \varepsilon^\alpha \le |x| \le 2 l \varepsilon^\alpha\},$ we arrive that
\begin{equation}\label{t2_2}
\sup\limits_{(x,y)\in\Omega_{\varepsilon}}\left|R_{\varepsilon, 2}^{(m)}(x,y)\right| \leq \check{C}_m \varepsilon^{-1-\alpha} \exp{\left(-\frac{\pi l}{\max{(h_1, h_2)}\ \varepsilon^{1-\alpha}}\right)},
\end{equation}
similarly we obtain that
\begin{equation}\label{t2_3}
\sup\limits_{(x,y)\in\Omega_{\varepsilon}}\left|R_{\varepsilon, 3}^{(m)}(x,y)\right| \leq \check{C}_m \varepsilon^{-1} \exp{\left(-\frac{\pi \delta}{\max{(h_1, h_2)}\ \varepsilon}\right)}.
\end{equation}
We calculate terms $R_{\varepsilon, j}^{(m)}, \quad j=4,5,6$ with the help of the Taylor formula with the integral remaining term for functions $f$, $\omega_2$ and $\{u_{2k}\}$ at the point $x=0$. It is easy to check that
\begin{equation}\label{t2_456}
\sup\limits_{(x,y)\in\Omega_{\varepsilon}}\left| R_{\varepsilon, j}^{(m)}(x,y)\right| \leq \check{C}_m\varepsilon^{\alpha(2m-1)},
\quad
j=4,5,6.
\end{equation}

The partial sum leaves the following residuals in the boundary conditions:
$$
\begin{array}{rcll}
\partial_y{U_\varepsilon^{(m)}}(x,y)|_{y=\pm{\varepsilon\frac{h_i}{2}}} + {\varepsilon\varphi_\pm^{(i)}}(x)   & = &
\breve{R}_{\varepsilon, (i)_\pm}^{(m)}(x) , & x\in{I_\varepsilon^{(i)}}, \quad i=1,2,
\\[2mm]
U_\varepsilon^{(m)}(\pm1,y)  & = &
0, & y\in\Upsilon_\varepsilon^{(i)}, \quad i=1,2,
\\[2mm]
\partial_\nu{U_\varepsilon^{(m)}}(x,y)   & = &
0, & (x,y)\in\Gamma_\varepsilon,
\end{array}
$$
where
\begin{equation}\label{breve_R}
\breve{R}_{\varepsilon, (i)_\pm}^{(m)}(x) = \varepsilon^{1+\alpha(2m-1)} \chi_l\left(\frac{x}{\varepsilon^\alpha}\right) \frac{1}{(2m-2)!} \ \varepsilon^{-\alpha} \int\limits_0^x \left( \frac{x-z}{\varepsilon^\alpha} \right)^{2m-2} \frac{d^{2m-2}\varphi_\pm^{(i)}}{dz^{2m-2}}(z) \ dz, \quad i=1,2.
\end{equation}
It follows from (\ref{breve_R}) that there exist positive constants $\overline{C}_m$ and   $\overline{\varepsilon}_0$ such that
\begin{equation}\label{t2_7}
     \forall\, \varepsilon\in(0, \overline{\varepsilon}_0):\, \sup\limits_{x\in  I_\varepsilon^{(i)}} \left|\breve{R}_{\varepsilon,(i)_\pm}^{(m)}(x)\right| \leq \overline{C}_m\varepsilon^{1+\alpha(2m-1)}, \quad i=1,2.
\end{equation}

Using estimates (\ref{t2_1})~--~(\ref{t2_456}) and (\ref{t2_7}) we obtain the following estimates:
\begin{equation}\label{t3_1}
\left\|R_{\varepsilon, 1}^{(m)}\right\|_{L^2 (\Omega_\varepsilon)} \leq
\check{C}_m \sqrt{h_1+h_2} \ \varepsilon^{2m+\frac12},
\end{equation}
\begin{equation}\label{t3_2}
\left\|R_{\varepsilon, 2}^{(m)}\right\|_{L^2 (\Omega_\varepsilon)} \leq
\check{C}_m \sqrt{l \ \max{(h_1, h_2)} \, } \ \varepsilon^{-\frac{\alpha+1}{2}} \ \exp{\left(-\frac{\pi l}{\max{(h_1, h_2)} \ \varepsilon^{1-\alpha}}\right)},
\end{equation}
\begin{equation}\label{t3_3}
\left\|R_{\varepsilon, 3}^{(m)}\right\|_{L^2 (\Omega_\varepsilon)} \leq
\check{C}_m \sqrt{l \ \max{(h_1, h_2)} \, } \ \delta^{\frac{1}{2}} \varepsilon^{-\frac{1}{2}} \ \exp{\left(-\frac{\pi \delta}{\max{(h_1, h_2)} \ \varepsilon}\right)},
\end{equation}
\begin{equation}\label{t3_4}
\left\|R_{\varepsilon, 4}^{(m)}\right\|_{L^2 (\Omega_\varepsilon)} \leq
\check{C}_m \left({\frac32 l h_1 + \frac32 l h_2 + |\Xi^{(0)}|}\right)^\frac12 \ \varepsilon^{\alpha(2m-\frac12)+\frac12},
\end{equation}
\begin{equation}\label{t3_56}
\left\|R_{\varepsilon, j}^{(m)}\right\|_{L^2 (\Omega_\varepsilon)} \leq
\check{C}_m \sqrt{lh_1+lh_2} \ \varepsilon^{\alpha(2m-\frac12)+\frac12}, \quad j=5,6,
\end{equation}
\begin{equation}\label{t3_7}
\left\|\breve{R}_{\varepsilon,(i)_\pm}^{(m)}\right\|_{L^2 (I_\varepsilon^{(i)})} \leq
\overline{C}_m \sqrt{\frac32 l \, } \ \varepsilon^{1+\alpha(2m-\frac12)}, \quad i=1,2.
\end{equation}
Thus, the difference  $W_\varepsilon := u_\varepsilon - U_\varepsilon^{(m)}$ satisfies the following system:
\begin{equation}\label{nevyazka}
\left\{\begin{array}{rcll}
-\Delta W_\varepsilon                                       & = &
R_\varepsilon^{(m)} &
\mbox{in} \ \Omega_\varepsilon,
\\[2mm]
-\partial_y W_\varepsilon(x,\pm\varepsilon\frac{h_i}{2})    & = &
\breve{R}_{\varepsilon,(i)_\pm}^{(m)}(x), &
x\in I_\varepsilon^{(i)}, \, i=1,2,
\\[2mm]
W_\varepsilon(\pm 1,y)                                      & = &
0, &
y\in\Upsilon_\varepsilon^{(i)}, \ i=1,2,
\\[2mm]
\partial_\nu{W_\varepsilon}                                 & = &
0, &
\mbox{on} \ \Gamma_\varepsilon,
\end{array}\right.
\end{equation}
This means that the constructed series (\ref{asymp_expansion}) is a formal asymptotic solution of problem (\ref{probl}).

From (\ref{nevyazka}) we derive the following integral relation:
$$
 \int \limits_{\Omega_\varepsilon} {|\nabla W_\varepsilon |}^2 dxdy =
 \int \limits_{\Omega_\varepsilon} R_\varepsilon^{(m)} \, W_\varepsilon \,dx dy \mp
 \sum \limits_{i=1}^2 \ \int \limits_{I_\varepsilon^{(i)}}
 \breve{R}_{\varepsilon, (i)_\pm}^{(m)} \, W_\varepsilon |_{y=\pm\varepsilon\frac{h_i}{2}} \,dx.
$$
In view of the Friedrichs inequality  and estimates  (\ref{t3_1})~--~(\ref{t3_7}), this yields the following inequality:
$$
\int \limits_{\Omega_\varepsilon} {|\nabla W_\varepsilon |}^2 dxdy
\leq  \check{c}_m \ \varepsilon^{\alpha(2m-\frac12)+\frac12} \|W_\varepsilon\|_{L^2(\Omega_\varepsilon)} +
\overline{c}_m \ \varepsilon^{1+\alpha(2m-\frac12)} \sum\limits_{i=1}^2  \|W_\varepsilon (\cdot, \pm\varepsilon\tfrac{h_i}{2})\|_{L^2(I_\varepsilon^{(i)})}
$$
$$
\leq {C}_m \, \varepsilon^{\alpha(2m-\frac12)+\frac12} \|\nabla W_\varepsilon\|_{L^2(\Omega_\varepsilon)}.
$$
This, in turn, means the asymptotic estimate (\ref{t0}) and proves the theorem.
\end{proof}

\begin{corollary}\label{Corollary}
The difference between the solution $u_\varepsilon$ of problem (\ref{probl}) and the solution $\omega_2$ of the limit problem (\ref{main}) admits the following asymptotic estimate:
\begin{equation}\label{t5}
 \|u_\varepsilon - \omega_2\|_{H^1(\Omega_\varepsilon)} \leq {C}_0 \,
 \varepsilon.
\end{equation}

In thin rectangles $\Omega_{\varepsilon,\alpha}^{(i)}:=\big( I_{\varepsilon, \alpha}^{(i)}\times\Upsilon_\varepsilon^{(i)}\big),$ $i=1, 2,$ the following estimates hold:
\begin{equation}\label{t5+}
 \|u_\varepsilon - \omega_2 - \varepsilon \omega_3 \|_{H^1(\Omega_{\varepsilon,\alpha}^{(i)})} \leq {C}_1 \,
 \varepsilon^{\frac32}, \ \ i=1, 2;
\end{equation}
in addition,
\begin{equation}\label{t6}
 \|E^{(i)}_\varepsilon(u_\varepsilon) - \omega_2  \|_{H^1(I_{\varepsilon, \alpha}^{(i)})} \leq {C}_2 \,
 \varepsilon, \ \ i=1, 2,
\end{equation}
\begin{equation}\label{t7}
\max_{x\in \overline{I}_{\varepsilon, \alpha}^{(i)}} \left| E^{(i)}_\varepsilon(u_\varepsilon)(x) - \omega_2(x)
\right|  \leq {C}_3 \,  \varepsilon, \ \ i=1, 2,
\end{equation}
where $I_{\varepsilon, \alpha}^{(1)}:= (-1, - 2l \varepsilon^\alpha),$ $I_{\varepsilon, \alpha}^{(2)}:=(2l \varepsilon^\alpha, 1),$
$\alpha$ is a fixed number from the interval $(\frac23, 1),$ $\omega_3$ is defined by the formula $(\ref{solutions})$ and
$$
E^{(i)}_\varepsilon(u_\varepsilon)(x) = \frac{1}{\varepsilon\, h_i}\int_{\Upsilon^{(i)}_\varepsilon} u_\varepsilon(x,y)\, dy, \quad i=1, 2.
$$

In the  neighbourhood $\Omega^{(0)}_{\varepsilon, l}:= \Omega_\varepsilon\cap \{(x,y): \ x \in (-\varepsilon l, \varepsilon l)\}$ of the joint, we get estimates
 \begin{equation}\label{t-joint1}
 \|\nabla_{x y}u_\varepsilon  - \nabla_{\xi \eta} \, N_1\|_{L^2(\Omega^{(0)}_{\varepsilon, l})} \le
 \|u_\varepsilon - \omega_2(0) - \varepsilon \, N_1\|_{H^1(\Omega^{(0)}_{\varepsilon, l})} \leq {C}_4 \, \varepsilon^{\frac32 \alpha + \frac12}.
\end{equation}
\end{corollary}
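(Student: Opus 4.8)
The plan is to derive every estimate of the corollary from Theorem~\ref{mainTheorem} by the triangle inequality $\|u_\varepsilon - g\|\le\|u_\varepsilon-U^{(m)}_\varepsilon\|+\|U^{(m)}_\varepsilon-g\|$, where $g$ is the relevant comparison function ($\omega_2$, $\omega_2+\varepsilon\omega_3$, $\omega_2(0)+\varepsilon N_1$, or a cross-sectional average of these). The first summand is controlled by (\ref{t0}) with a suitable $m$, while the second is computed explicitly from the known structure of the partial sum $U^{(m)}_\varepsilon$ in (\ref{aaN}). Throughout I would use the thin-domain scaling: for $g=g(x)$ one has $\|g\|_{L^2(\Omega_\varepsilon)}=\mathcal{O}(\sqrt\varepsilon)\,\|g\|_{L^2}$, whereas differentiating a rescaled profile $v(x,\tfrac y\varepsilon)$ in $y$ produces a factor $\varepsilon^{-1}$; after the change $\xi=\tfrac x\varepsilon,\ \eta=\tfrac y\varepsilon$, an inner term $\varepsilon^k N_k(\tfrac x\varepsilon,\tfrac y\varepsilon)$ thus has $L^2$-norm $\mathcal{O}(\varepsilon^{k+1})$ and gradient norm $\mathcal{O}(\varepsilon^{k})$ over a region in which $|\xi|$ stays bounded.

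For (\ref{t5+}) I would take $m=1$ and restrict to $\Omega^{(i)}_{\varepsilon,\alpha}$, where $\chi_l(\tfrac{x}{\varepsilon^\alpha})\equiv0$ and, away from an exponentially thin end-layer, $\chi^\pm\equiv0$. Recalling $u_0\equiv u_1\equiv0$, so $u_0^*=\omega_2$, $u_1^*=\omega_3$, one has $U^{(1)}_\varepsilon-\omega_2-\varepsilon\omega_3=\varepsilon^2u_2^*+\mathcal{O}(\varepsilon^2)$, and by the scaling above the dominant contribution is the transverse derivative of $\varepsilon^2u_2(x,\tfrac y\varepsilon)$, of order $\varepsilon^{3/2}$; since the remainder (\ref{t0}) is $\mathcal{O}(\varepsilon^{3\alpha/2+1/2})=o(\varepsilon^{3/2})$ for $\alpha>\tfrac23$, this yields (\ref{t5+}). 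For (\ref{t5}) over all of $\Omega_\varepsilon$ the new ingredient is the joint core $\{|x|\lesssim\varepsilon\}$, where $U^{(1)}_\varepsilon=N_0+\varepsilon N_1+\dots$ with $N_0=\omega_2(0)$. There $\nabla(U^{(1)}_\varepsilon-\omega_2)$ is genuinely $\mathcal{O}(1)$, but only over a set of measure $\mathcal{O}(\varepsilon^2)$, contributing exactly $\mathcal{O}(\varepsilon)$ in $H^1$; on the transition tail $\varepsilon\lesssim|x|\lesssim\varepsilon^\alpha$ the leading gradients cancel, since the matching condition (\ref{inner_asympt}) gives $\partial_\xi N_1\to\tfrac{d\omega^{(i)}_2}{dx}(0)$ and $\partial_\eta N_1\to0$, so that $\partial_\xi N_1-\omega_2'(x)=\mathcal{O}(x)$ plus an exponentially small term. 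Summing these contributions gives $\|U^{(1)}_\varepsilon-\omega_2\|_{H^1(\Omega_\varepsilon)}=\mathcal{O}(\varepsilon)$, hence (\ref{t5}).

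The cross-sectional estimates (\ref{t6})--(\ref{t7}) rely on the fact that averaging annihilates the transverse-mean-zero profiles: since $\langle u^{(i)}_k(x,\cdot)\rangle_{\Upsilon_i}=0$, and $a^{(i)}_0=0$ forces $\langle\Pi^{(i)}_k(\xi,\cdot)\rangle_{\Upsilon_i}=0$, one obtains $E^{(i)}_\varepsilon(u^*_k)=\omega_{k+2}$ and $E^{(i)}_\varepsilon\big(\chi^\pm\Pi^{(i)}_{2k}\big)=0$. Thus on $I^{(i)}_{\varepsilon,\alpha}$ (where $\chi_l\equiv0$) we have $E^{(i)}_\varepsilon(U^{(1)}_\varepsilon)=\omega_2+\varepsilon\omega_3+\varepsilon^2\omega_4$, so $E^{(i)}_\varepsilon(U^{(1)}_\varepsilon)-\omega_2=\mathcal{O}(\varepsilon)$ in $H^1(I^{(i)}_{\varepsilon,\alpha})$. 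For the remainder I use the Cauchy--Schwarz bound $\|E^{(i)}_\varepsilon(v)\|_{H^1(I^{(i)})}\le(\varepsilon h_i)^{-1/2}\|v\|_{H^1(\Omega^{(i)}_\varepsilon)}$ with $v=u_\varepsilon-U^{(1)}_\varepsilon$; by (\ref{t0}) this is $\mathcal{O}(\varepsilon^{-1/2}\varepsilon^{3\alpha/2+1/2})=\mathcal{O}(\varepsilon^{3\alpha/2})=o(\varepsilon)$. Together these give (\ref{t6}), and (\ref{t7}) follows from the one-dimensional embedding $H^1(I^{(i)}_{\varepsilon,\alpha})\hookrightarrow C(\overline{I^{(i)}_{\varepsilon,\alpha}})$, whose constant is uniform in $\varepsilon$ because $|I^{(i)}_{\varepsilon,\alpha}|\to1$.

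Finally, for (\ref{t-joint1}) I take $m=1$ and work in $\Omega^{(0)}_{\varepsilon,l}$, where $\chi_l\equiv1$ and $\chi^\pm\equiv0$, so that $U^{(1)}_\varepsilon=N_0+\varepsilon N_1+\varepsilon^2N_2$ with $N_0=\omega_2(0)$; hence $U^{(1)}_\varepsilon-\omega_2(0)-\varepsilon N_1=\varepsilon^2N_2$, whose $H^1(\Omega^{(0)}_{\varepsilon,l})$-norm is $\mathcal{O}(\varepsilon^2)$ by the inner scaling over the bounded set $\{|\xi|<l\}$. Combining with the remainder $C_1\varepsilon^{3\alpha/2+1/2}$ and noting $\varepsilon^2\le\varepsilon^{3\alpha/2+1/2}$ for $\alpha\le1$ gives the right-hand inequality; the left-hand one is immediate, since $\nabla_{xy}\big(\varepsilon N_1(\tfrac x\varepsilon,\tfrac y\varepsilon)\big)=\nabla_{\xi\eta}N_1$ and $\nabla_{xy}\omega_2(0)=0$, whence $\|\nabla_{xy}u_\varepsilon-\nabla_{\xi\eta}N_1\|_{L^2}=\|\nabla_{xy}(u_\varepsilon-\omega_2(0)-\varepsilon N_1)\|_{L^2}\le\|u_\varepsilon-\omega_2(0)-\varepsilon N_1\|_{H^1}$. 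The hard part will be the sharp bookkeeping in (\ref{t5}): I must confirm that the genuinely $\mathcal{O}(1)$ inner gradient over the tiny joint core, the $\chi_l$-transition terms, and the residual mismatch $\partial_\xi N_1-\omega_2'$ on the tail all combine to no more than $\mathcal{O}(\varepsilon)$, so that the geometric irregularity of the joint does not degrade the global convergence rate below first order.
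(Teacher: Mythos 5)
Your proposal is correct and follows essentially the same route as the paper: every estimate is derived from Theorem~\ref{mainTheorem} at $m=1$ via the triangle inequality with $U^{(1)}_\varepsilon$, the extra terms of the partial sum being bounded exactly as in the paper by thin-domain scaling (the $\varepsilon^{3/2}$ from $\varepsilon^2\partial_y u_2$, the $\mathcal{O}(\varepsilon)$ joint-core contribution, the $\mathcal{O}(\varepsilon^2)$ bound for $\varepsilon^2 N_2$ on $\Omega^{(0)}_{\varepsilon,l}$), the exponential decay of $N_k-G_k$ and $\Pi^{(i)}_{2}$ with Taylor matching in the transition zone, Cauchy--Schwarz for the cross-sectional averages, and the embedding $H^1(I^{(i)}_{\varepsilon,\alpha})\hookrightarrow C(\overline{I}^{(i)}_{\varepsilon,\alpha})$. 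The only differences are inconsequential: $\chi^\pm$ is supported in a fixed $\delta$-neighbourhood of $x=\pm1$ rather than an ``exponentially thin end-layer'' (it is $\Pi^{(i)}_2$ that decays exponentially, and the $\varepsilon^{2}\chi^\pm\Pi^{(i)}_{2}$ terms are $\mathcal{O}(\varepsilon^{2})$ in $H^1$ either way), and the paper obtains (\ref{t6}) by averaging the already-proved (\ref{t5+}) rather than averaging the partial sum explicitly via the mean-zero properties $\langle u^{(i)}_k\rangle_{\Upsilon_i}=0$, $a^{(i)}_0=0$ --- a cosmetic rearrangement of the same facts.
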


\begin{proof} Denote by $\chi^{\varepsilon}_{l,\alpha}(\cdot):= \chi_l(\tfrac{\cdot}{\varepsilon^\alpha}).$
Using the smoothness of the functions $\{\omega_k\}_{k=2}^4$ and the exponential decay of  the functions
$\{ N_k-G_k \}_{k=1,2}$, $\Pi^{(1)}_{2}$ and $\Pi^{(2)}_{2}$  at infinity, we deduce the inequality
(\ref{t5}) from  estimate (\ref{t0}) at $m=1$:
$$
 \left\|u_\varepsilon-\omega_2\right\|_{H^1(\Omega_\varepsilon)}
 \leq \left\|u_\varepsilon-U^{(1)}_{\varepsilon}\right\|_{H^1(\Omega_\varepsilon)} +
 \Big\| - \chi^{\varepsilon}_{l,\alpha}\, \omega_2 + \chi^{\varepsilon}_{l,\alpha}\, N_0 + \varepsilon \left( \big(1 - \chi^{\varepsilon}_{l,\alpha}\big)\, \omega_3 + \chi^{\varepsilon}_{l,\alpha}\, N_1 \right)
$$
$$
  + \varepsilon^{2} \left( \big(1 - \chi^{\varepsilon}_{l,\alpha}\big)\,(u_{2} + \omega_{4}) +
  \chi^{\varepsilon}_{l,\alpha}\, {N}_{2} + \chi^- \Pi^{(1)}_{2} + \chi^+ \Pi^{(2)}_{2} \right) \Big\|_{H^1(\Omega_\varepsilon)}
$$
$$
 \leq
 \widetilde{C}_1 \, \varepsilon^{\frac32\alpha+\frac12}
  + \left\| \omega_2 - \omega_2(0) \right\|_{H^1(\Omega_\varepsilon^{(0)})}
 + \varepsilon \left\| N_1 \right\|_{H^1(\Omega_\varepsilon^{(0)})}
 + \varepsilon^2 \left\| N_2 \right\|_{H^1(\Omega_\varepsilon^{(0)})}
$$
$$
+ \varepsilon^2 \left(\| \chi^- \Pi_2^{(1)} \|_{H^1(\Omega_\varepsilon^{(1)})}
+ \| \chi^+ \Pi_2^{(2)} \|_{H^1(\Omega_\varepsilon^{(2)})} \right)
$$
$$
 + \sum\limits_{i=1}^2
  \Big\| \chi^{\varepsilon}_{l,\alpha} (\omega_2(0) - \omega_2) +  \varepsilon \left( (1-\chi^{\varepsilon}_{l,\alpha}) \omega_3 +
 \chi^{\varepsilon}_{l,\alpha} N_1 \right) +
 \varepsilon^{2} \left( (1-\chi^{\varepsilon}_{l,\alpha})(u_{2} + \omega_{4}) +
  \chi^{\varepsilon}_{l,\alpha} {N}_{2} \right) \Big\|_{H^1(\Omega^{(i)}_\varepsilon)}
$$
$$
\leq \widetilde{C}_1 \, \varepsilon^{\frac32\alpha+\frac12}
  + \varepsilon c_1 +  \varepsilon \left\| N_1 \right\|_{H^1(\Xi^{(0)})}
 + \varepsilon^2 \left\| N_2 \right\|_{H^1(\Xi^{(0)})}
+ \varepsilon^2 \left(\| \chi^- \Pi_2^{(1)} \|_{H^1(\Omega_\varepsilon^{(1)})}
+ \| \chi^+ \Pi_2^{(2)} \|_{H^1(\Omega_\varepsilon^{(2)})} \right)
$$
$$
+ \sum_{i=1}^2 \Big( \varepsilon \left\| \chi^{\varepsilon}_{l,\alpha}\,  ( N_1 - G_1 ) \right\|_{H^1(\Omega_\varepsilon^{(i)})}
 + \varepsilon^2 \left\| \chi^{\varepsilon}_{l,\alpha} \, ( N_2 - G_2 ) \right\|_{H^1(\Omega_\varepsilon^{(i)})} \Big)
$$
$$
+ \sum_{i=1}^2 \Big\| \chi^{\varepsilon}_{l,\alpha} \, \Big ( \omega_2(0) + x \frac{d\omega_2}{dx}(0)
 + \frac{x^2}{2} \frac{d^2\omega_2}{dx^2}(0) -  \omega_2 \Big) \Big\|_{H^1(\Omega_\varepsilon^{(i)})}
$$
$$
+ \varepsilon \sum_{i=1}^2  \Big\| \chi^{\varepsilon}_{l,\alpha}\, \Big( \omega_3(0) + x \frac{d\omega_3}{dx}(0)
 -  \omega_3 \Big) \Big\|_{H^1(\Omega_\varepsilon^{(i)})}
$$
$$
+\varepsilon^2  \sum_{i=1}^2 \left( \left\| \chi^{\varepsilon}_{l,\alpha}\, \big(u_2(0,\cdot) - u_2 \big) \right\|_{H^1(\Omega_\varepsilon^{(i)})}
 +  \left\| \chi^{\varepsilon}_{l,\alpha}\, \big(\omega_4(0) - \omega_4\big) \right\|_{H^1(\Omega_\varepsilon^{(i)})} \right)
$$
$$
+ \sum_{i=1}^2 \left( \varepsilon \left\| \omega_3 \right\|_{H^1(\Omega_\varepsilon^{(i)})} +
\varepsilon^2 \left\| u_2 + \omega_4 \right\|_{H^1(\Omega_\varepsilon^{(i)})} \right) \le C_1 \, \varepsilon.
$$

Again with the help of estimate (\ref{t0}) at $m=1,$ we deduce
$$
  \|u_\varepsilon - \omega_2 - \varepsilon \omega_3 \|_{H^1(\Omega_{\varepsilon,\alpha}^{(i)})} \leq
  \|u_\varepsilon-U^{(1)}_{\varepsilon}\|_{H^1(\Omega_\varepsilon)} +
\varepsilon^{2} \big\| u_{2} + \omega_{4} + \chi^- \Pi^{(1)}_{2} + \chi^+ \Pi^{(2)}_{2} \big\|_{H^1(\Omega_{\varepsilon,\alpha}^{(i)})}
$$
$$
\leq \widetilde{C}_1 \, \varepsilon^{\frac32\alpha+\frac12} + \widetilde{C}_2 \, \varepsilon^{\frac32},
$$
whence we get (\ref{t5+}).
Using the Cauchy-Buniakovskii-Schwarz inequality and (\ref{t5+}),  we obtain inequalities (\ref{t6}).
Since the space $H^1(I_{\varepsilon, \alpha}^{(i)})$ continuously embedded in $C(\overline{I}_{\varepsilon, \alpha}^{(i)}),$
 from (\ref{t6}) it follows inequalities (\ref{t7}).

From inequalities
$$
 \|u_\varepsilon - \omega_2(0) - \varepsilon \, N_1\|_{H^1(\Omega^{(0)}_{\varepsilon, l})} \leq
\| u_\varepsilon - U^{(1)}_{\varepsilon}\|_{H^1(\Omega_\varepsilon)} + \varepsilon^2 \|N_2\|_{H^1(\Omega^{(0)}_{\varepsilon, l})}
\leq \widetilde{C}_1 \, \varepsilon^{\frac32\alpha+\frac12} + \widetilde{C}_3 \, \varepsilon^2
$$
it follows more better energetic estimate (\ref{t-joint1}) in a neighbourhood of the joint $\Omega^{(0)}_\varepsilon.$
\end{proof}

\begin{remark}
If $\varphi^{(i)}_\pm \equiv 0$ and the function $f$ depends only on the variable $x,$ then all coeficient $\{u_{2k}\},$ $\{\Pi^{(1)}_{2k}\}$ and $\{\Pi^{(2)}_{2k}\}$ are equal to 0.
In this case the asymptotic series $(\ref{asymp_expansion})$ has the following form:
\begin{equation}\label{asymp_expansion1}
\sum\limits_{k=0}^{+\infty} \varepsilon^{k}\Bigg( \left(1 - \chi_l\left(\frac{x}{\varepsilon^\alpha}\right)\right) \omega_{k+2}(x) + \chi_l \left(\frac{x}{\varepsilon^\alpha}\right) {N}_{k}\left(\frac{x}{\varepsilon},\frac{y}{\varepsilon}\right)\Bigg), \quad (x,y)\in\Omega_\varepsilon,
\end{equation}
and the residual terms $\{R^{(m)}_{\varepsilon, j}\}_{j=1}^6$ are also simplified respectively, but the asymptotic estimates
$(\ref{aaN})$ remain the same. Nevertheless, as follows from the proof of Corollary~\ref{Corollary} the
asymptotic estimates $(\ref{t5+}) - (\ref{t7})$ become better:
\begin{equation}\label{t5++}
 \|u_\varepsilon - \omega_2 - \varepsilon \omega_3 \|_{H^1(\Omega_{\varepsilon,\alpha}^{(i)})} \leq {C}_1 \,
 \varepsilon^{\frac32 \alpha + \frac12}, \ \ i=1, 2;
\end{equation}
\begin{equation}\label{t6+}
 \|E^{(i)}_\varepsilon(u_\varepsilon) - \omega_2 - \varepsilon \omega_3 \|_{H^1(I_{\varepsilon, \alpha}^{(i)})} \leq {C}_2 \,
 \varepsilon^{\frac32 \alpha}, \ \ i=1, 2;
\end{equation}
\begin{equation}\label{t7+}
\max_{x\in \overline{I}_{\varepsilon, \alpha}^{(i)}} \left| E^{(i)}_\varepsilon(u_\varepsilon)(x) - \omega_2(x) - \varepsilon \omega_3(x)
\right|  \leq {C}_3 \,  \varepsilon^{\frac32 \alpha}, \ \ i=1, 2.
\end{equation}
\end{remark}

\section{Conclusions}

{\bf 1.} The energetic estimate (\ref{t5}) partly confirms the first formal result of \cite{Gaudiello-Kolpakov} (see p. 296)  that the local geometrical irregularity of  the analyzed structure  does not significantly affect on  the global-level properties of the framework, which are described by the limit problem (\ref{main}) and its solution $\omega_2$ (the leading term of the asymptotics).

But now, due to estimates (\ref{t5+}) and (\ref{t5++}) -- (\ref{t7+}) it became possible to identify the impact of
the geometric irregularity and material characteristics of the joint on the global level (the second term $\omega_3$ of the regular asymptotics (\ref{regul}) depends on the constant $\delta^+_{1}$ that takes into account all these factors (see (\ref{delta_1}))).
This our conclusion does not coincide with the second main result of \cite{Gaudiello-Kolpakov} (see p. 296) that
``{\it the joints of normal type manifest themselves on the local level only}''.

\smallskip

 In addition, in \cite{Gaudiello-Kolpakov} the authors stated that the main idea of their approach ``{\it
is to use a local perturbation corrector of the form $\varepsilon N({\bf x}/\varepsilon) \frac{d u_0}{dx_1}$ with the condition that the function $N({\bf y})$
is localized near the joint} '', i.e., $N({\bf y}) \to 0$ as $|{\bf y}| \to +\infty,$ and the main assumption of this approach is that $\nabla_{y} N \in L_1(Q_{\infty})$ (see (14) and similar assumptions on p. 300 and p. 303).

As we see the coefficients $\{N_k\}$ of the inner asymptotics (\ref{junc}) behave as polynomials at infinity and do not decrease exponentially (see (\ref{inner_asympt})).  Therefore, they influence directly the terms of the regular asymptotics beginning with the second one.
Thus, the main assumption made in \cite{Gaudiello-Kolpakov} is not satisfied.
This is the second our principal disparity with results of \cite{Gaudiello-Kolpakov}.

\medskip
\noindent
{\bf 2.}
From (\ref{t5}) it follows that the gradient $\nabla u_\varepsilon$ is equivalent to $\frac{d\omega_2}{dx}$ in the $L^2$-norm over whole junction $\Omega_\varepsilon$ as $\varepsilon \to 0.$
Since $\|\frac{d\omega_2}{dx}\|_{L^2(\Omega^{(0)}_{\varepsilon, l})} = {\cal O}(\varepsilon)$ as $\varepsilon \to 0,$ the estimate (\ref{t5}) is not informative in the neighbourhood $\Omega^{(0)}_{\varepsilon, l}$ of the joint $\Omega^{(0)}_{\varepsilon}.$

The form of the complete asymptotic expansion (\ref{asymp_expansion}) gives us possibility to improve the zero-order approximation
of the gradient (flux) of the solution both in the main parts~$I_{\varepsilon, \alpha}^{(i)},$ $i=1, 2,$ of the junction:
$$
\nabla u_\varepsilon(x, y)  \sim \frac{d\omega_2}{dx}(x) + \varepsilon \, \frac{d\omega_3}{dx}(x) \quad \text{as}\quad \varepsilon \to 0
$$
considering the geometric irregularity and material characteristics of the joint (see formulas (\ref{t5+}),  (\ref{t5++})), and
in the neighbourhood $\Omega^{(0)}_{\varepsilon, l}$ of the joint:
$$
\nabla u_\varepsilon(x, y) \sim \nabla_{\xi\eta}({N}_{1}(\xi, \eta))|_{\xi=\frac{x}{\varepsilon}, \eta= \frac{y}{\varepsilon}} \quad \text{as}\quad \varepsilon \to 0
$$
(see (\ref{t-joint1})). Also using estimates (\ref{t0}), we can obtain more better approximation of the solution and its gradient with preset accuracy.

\medskip
\noindent
{\bf 3.}
The results obtained give the right, in terms of practical application, to replace the complex boundary-value problem (\ref{probl}) with
the corresponding simpler $1$- dimensional boundary-value problem (\ref{main}) with sufficient accuracy that measured by the parameter $\varepsilon$ characterizing the thickness and the local geometrical irregularity. In this regard, the uniform pointwise estimates (\ref{t7}) and (\ref{t7+}), that are very important for applied problems, also confirm this conclusion.

\medskip
\noindent
{\bf 4.}
The method proposed in the present paper for the construction of asymptotic expansions can be used
for the asymptotic investigation of boundary-value problems in graph-junctions of thin domains (Fig. \ref{fig5}),
or graph-junctions of thin perforated domains with rapidly varying thickness. In the last case, it is necessary to add
series with rapidly oscillating coefficients to the regular part of the asymptotics (see \cite{Mel_Pop_MatSb}).
\begin{figure}[htbp]
\begin{center}
\includegraphics[width=10cm]{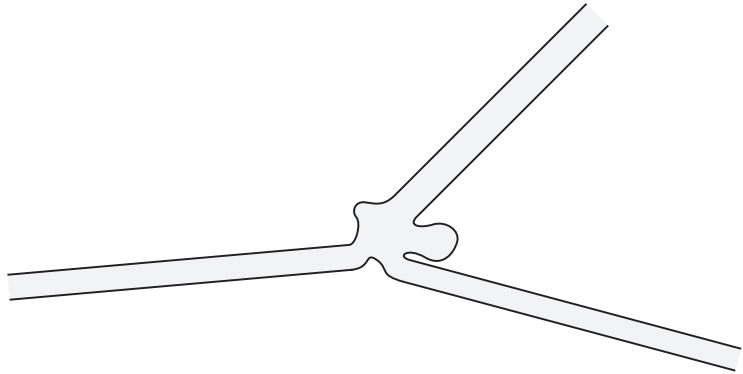}
\caption{A graph-junction of thin domains with a local joint}\label{fig5}
\end{center}
\end{figure}

\end{document}